\newtheorem{theorem}{Theorem}[section]
\newtheorem{lemma}[theorem]{Lemma}
\newtheorem{proposition}[theorem]{Proposition}
\newtheorem{corollary}[theorem]{Corollary}
\theoremstyle{definition}
\newtheorem{remark}[theorem]{Remark}
\newtheorem{example}[theorem]{Example}
\theoremstyle{definition}
\def\fnum{equation}
\newtheorem{Pro}[\fnum]{Proposition}
\numberwithin{equation}{section}
\begin{document}
\title
[Disks area-minimizing in mean convex Riemannian $n$-manifolds]
{Disks area-minimizing in mean convex Riemannian $n$-manifolds}

\author{Ezequiel Barbosa}
\address{Universidade Federal de Minas Gerais (UFMG), Caixa Postal 702, 30123-970, Belo Horizonte, MG, Brazil}
\email{ezequiel@mat.ufmg.br}
\author[Franciele Conrado]{Franciele Conrado$^{2}$} \address{$^{2}$Instituto de Ci\^{e}ncias Exatas-Universidade Federal de Minas Gerais\\ 30161-970-Belo Horizonte-MG-BR} \email{franconradomat@gmail.com} \thanks{$^{2}$ Partially supported by CNPq} 

\begin{abstract}
We prove the validity of an inequality involving a mean of the area and the length of the boundary of immersed disks whose boundaries are homotopically non-trivial curves in an oriented compact manifold which possesses convex mean curvature boundary, positive escalar curvature and admits a map to $\mathbb{D}^2\times T^{n}$ with nonzero degree, where $\mathbb{D}^2$ is a disk and $T^n$ is an $n$-dimensional torus. We also prove a rigidity result for the equality case when the boundary is totally geodesic. This can be viewed as a partial generalization of a result due to Lucas Ambr\'ozio in \cite{AMB} to higher dimensions.
\end{abstract}

\maketitle

\section{Introduction}\label{intro}

An important question in modern differential geometry is about the connection between the curvatures and topology of a manifold. 
A very significant and historic result on this is the famous Gauss-Bonnet theorem. As a consequence of that theorem, we note that the topological invariant, named Euler Characteristic, gives a topological obstruction to the existence of a certain type of Riemannian metrics on surfaces. In higher dimensions, the relationship between curvatures and the topology of a manifold is much more complicated. However, Schoen and Yau, in their celebrated joint work, discovered interesting relations between the scalar curvature of a three-dimensional manifold and the topology of stable minimal surfaces inside it, which emerge when one uses the second variation formula for the area, the Gauss equation and the Gauss-Bonnet theorem.

In a very recent paper Bray, Brendle and Neves \cite{BBN} proved an elegant rigidity result concerning to an area-minimising 2-sphere embedded in a closed  3-dimensional manifold $(M^3,g)$ with positive scalar curvature and $\pi_2(M)\neq0$. In that work, they showed the following result. Denote by $\mathcal{F}$ the set of all smooth maps $f:\mathbb{S}^2\rightarrow M$ which represent a nontrivial element in $\pi_2(M)$. Define
\[
\mathcal{A}(M,g)=\inf \{ Area(\mathbb{S}^2, f^*g)\,:\,\,f\in \mathcal{F} \}\,.
\]

If $R_g\geq2$, the following inequality holds:
\[
\mathcal{A}(M,g)\leq 4\pi\,,
\]
where $R_g$ denote the scalar curvature of $(M,g)$. Moreover, if the equality holds then the universal cover of $(M,g)$ is isometric to the standard cylinder $\mathbb{S}^2\times \mathbb{R}$ up to scaling. For more results concerning to rigidity of 3-dimensional closed manifolds coming from area-minimising surfaces, see \cite{BBNE}, \cite{CG}, \cite{MM}, \cite{RM}, \cite{Nunes}.  In \cite{JZ}, J. Zhou showed a version of Bray, Brendle and Neves \cite{BBN} result for high co-dimension:   for $n + 2 \leq 7$, let $(M^{n+2}, g)$ be an oriented closed Riemannian manifold with $R_g \geq 2$, which admits a non-zero degree map $F:M\rightarrow \mathbb{S}^2\times T^n$. Then $\mathcal{A}(M,g)\leq4\pi$. Furthermore, the equality implies that the universal covering  of $(M^{n+2},g)$ is $\mathbb{S}^2\times \mathbb{R}^n$.

In the same direction as the results mentioned above for the closed manifolds, let $M$ be a Riemannian manifold with nonempty boundary $\partial M$.  A free boundary minimal surface in $M$ is a minimal surface in $M$ with boundary contained in the boundary $\partial M$ and meeting it orthogonally. Such surfaces arise variationally as critical points of the area among surfaces in $M$ whose boundaries lie on $\partial M$ but are free to vary on $\partial M$. The simplest examples, considering $M$ as the unit ball with center at the origin in the Euclidean space, are an equatorial plane disk and the critical catenoid, the unique piece
of a suitably scaled catenoid in the unit ball.  A. Fraser and R. Schoen \cite{FS3} established a connection between free boundary minimal surfaces and the Steklov eigenvalue problem, and proved existence of an embedded free boundary minimal surface of genus zero with any number of boundary components. Since then, many works was developed to study free-boundary minimal surfaces. For more results concerning free boundary minimal surfaces, see the following references and the references therein: \cite{AMB}, \cite{FPeng}, \cite{AF1}, \cite{AF2}, \cite{AF3}, \cite{FS1}, \cite{FS2}, \cite{FS3}, \cite{FS4}, \cite{FPZ}, \cite{MLi}.   

Consider now a  Riemannian $n$-manifold with non-empty boundary $(M,\partial M,g)$. Let $\mathcal{F}_M$ be the set of all immersed disks in $M$ whose boundaries are curves in $\partial M$ that are homotopically non-trivial in $\partial M$. If $\mathcal{F}_M\not=\emptyset$, we define

\[\mathcal{A}(M,g)=\inf_{\Sigma\in \mathcal{F}_M} |\Sigma|_g \ \ \text{and} \ \ \mathcal{L}(M,g)=\inf_{\Sigma\in \mathcal{F}_M} |\partial \Sigma|_g\]

In the paper \cite{AMB}, L. C. Ambr\'ozio proved the following result.

\begin{theorem}
Let $(M,g)$ be a compact Riemannian $3$-manifold with mean convex boundary. Assume that $\mathcal{F}_M\not=\emptyset$. Then 
\begin{equation}\label{equ0}
\frac{1}{2}\inf R_g^M \mathcal{A}(M,g)+\inf H_g^{\partial M}\mathcal{L}(M,g)\leq 2\pi.
\end{equation}
Moreover, if equality holds, then the universal covering of $(M,g)$ is isometric to $(\mathbb{R}\times \Sigma_0, dt^2+g_0)$, where  $(\Sigma_0,g_0)$ is a disk with constant Gaussian curvature $\frac{1}{2}\inf R_g$ and $\partial\Sigma_0$ has constant geodesic curvature $\inf H_g^{\partial M}$ in $(\Sigma_0,g_0)$.
\end{theorem}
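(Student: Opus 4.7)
The natural plan is to realize the infimum $\mathcal{A}(M,g)$ by a free boundary stable minimal disk and then run a Gauss--Bonnet--type argument. First I would invoke existence results of Meeks--Yau/Gr\"uter--Jost type, using mean convexity of $\partial M$ as a barrier, to produce an immersed disk $\Sigma\subset M$ with $|\Sigma|_g=\mathcal{A}(M,g)$, $\partial\Sigma\subset\partial M$ freely homotopic to a non-trivial loop in $\partial M$, such that $\Sigma$ is a stable free boundary minimal disk (so it meets $\partial M$ orthogonally along $\partial\Sigma$).

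Next I would test the second variation of area with the constant function $\phi\equiv 1$. Stability gives
\begin{equation*}
\int_\Sigma \bigl(|A|^2+\mathrm{Ric}_g(N,N)\bigr)\,dA+\int_{\partial\Sigma} h^{\partial M}(N,N)\,ds\le 0,
\end{equation*}
where $N$ is a unit normal to $\Sigma$ and $h^{\partial M}$ is the second fundamental form of $\partial M$. Using the Gauss equation in dimension three for the minimal surface $\Sigma$, one has
\begin{equation*}
|A|^2+\mathrm{Ric}_g(N,N)=\tfrac{1}{2}R_g-K_\Sigma+\tfrac{1}{2}|A|^2,
\end{equation*}
which, combined with the Gauss--Bonnet formula $\int_\Sigma K_\Sigma\,dA+\int_{\partial\Sigma}\kappa_g^\Sigma\,ds=2\pi$, and the identity $\kappa_g^\Sigma=h^{\partial M}(T,T)$ (valid because $\Sigma\perp\partial M$, with $T$ the unit tangent to $\partial\Sigma$), yields
\begin{equation*}
\tfrac{1}{2}\int_\Sigma R_g\,dA+\tfrac{1}{2}\int_\Sigma |A|^2\,dA+\int_{\partial\Sigma}H^{\partial M}_g\,ds\le 2\pi.
\end{equation*}
Dropping the non-negative $|A|^2$ term and using the infima bounds on $R_g$ and $H^{\partial M}_g$ together with $|\Sigma|_g=\mathcal{A}(M,g)$ and $|\partial\Sigma|_g\ge\mathcal{L}(M,g)$ delivers inequality \eqref{equ0} (handling the signs of the infima in the obvious way, with mean convexity giving $\inf H^{\partial M}_g\ge 0$).

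For the equality case, saturation forces $|A|\equiv 0$ on $\Sigma$ (so $\Sigma$ is totally geodesic), $R_g\equiv\inf R_g$ on $\Sigma$, $H^{\partial M}_g\equiv\inf H^{\partial M}_g$ on $\partial\Sigma$, and also shows that $\phi\equiv 1$ realizes equality in the stability inequality. The latter means $1$ lies in the kernel of the associated Jacobi operator with free boundary condition; reading off the Jacobi equation and the Robin boundary condition gives $\mathrm{Ric}_g(N,N)\equiv 0$ along $\Sigma$ and $h^{\partial M}(N,N)\equiv 0$ along $\partial\Sigma$. The Gauss equation then gives $K_\Sigma\equiv\tfrac{1}{2}\inf R_g$, and $\kappa_g^{\partial\Sigma}\equiv\inf H^{\partial M}_g$. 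Because $1$ is a Jacobi function, I would construct a smooth one-parameter family $\{\Sigma_t\}$ of nearby free boundary minimal disks foliating a tubular neighborhood of $\Sigma$; by minimality of $\mathcal{A}$ each nearby leaf must again be an area minimizer in $\mathcal{F}_M$, and the preceding analysis applies to every leaf. Computing the variation of the induced metric along this foliation then shows that the ambient metric is locally a Riemannian product $dt^2+g_0$ on $\Sigma\times(-\varepsilon,\varepsilon)$.

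The final step is to propagate this local splitting globally. I would use a connectedness/continuation argument (in the spirit of Cai--Galloway and the analogous step in \cite{BBN}): the set of points lying on a totally geodesic free boundary minimal disk isometric to $\Sigma_0$ is non-empty, open (by the local product argument), and closed (by compactness and a limiting argument producing a minimizer at the boundary of the foliated region), so it exhausts $M$ once one passes to the universal cover to unwrap the foliation. This yields the isometry with $(\mathbb R\times\Sigma_0,\,dt^2+g_0)$. The main obstacle I anticipate is precisely this global step: one must verify that the foliation cannot develop singularities, that the free boundary condition is preserved under continuation along $\partial M$, and that passing to the universal cover is the right way to handle the possibility that the leaves wrap nontrivially in $M$.
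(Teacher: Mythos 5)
This statement is Ambr\'ozio's theorem, which the paper imports from \cite{AMB} without proof, so there is no in-paper argument to compare with line by line; what you propose is, in outline, exactly the original route of \cite{AMB} (modelled on \cite{BBN}): minimize area in $\mathcal{F}_M$ to obtain a stable free boundary minimal disk, test stability with $\varphi\equiv 1$, rewrite the curvature terms via the Gauss equation and the identity $\kappa_g=h^{\partial M}(T,T)$ coming from orthogonality, apply Gauss--Bonnet, and in the equality case pass from infinitesimal rigidity to a local foliation and a splitting. The machinery this paper actually develops is aimed at the higher-dimensional Theorem 1.2 and is genuinely different: the inequality is obtained by a Schoen--Yau-type descent through free boundary minimal $k$-slicings (weighted volume minimizers and first eigenfunctions, Sections 2 and 3), and the rigidity is not argued by a foliation at all but by doubling $M$ across its boundary and quoting Zhou's theorem \cite{JZ}; that shortcut is the reason the paper's equality statement needs $\partial M$ totally geodesic, whereas the three-dimensional theorem you are reproving has no such restriction. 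Your approach, if completed, yields the full rigidity conclusion in dimension three; the paper's approach buys the generalization to $3\le n+2\le 7$ at the price of the weaker rigidity hypothesis.

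Two steps of your plan conceal most of the work. First, existence of the minimizer: the Meeks--Yau/Gr\"uter--Jost theory must be applied so that the limiting disk remains in $\mathcal{F}_M$ (its boundary could a priori become null-homotopic in $\partial M$), and weak mean convexity of $\partial M$ serves as a barrier only after an approximation argument; this occupies a genuine portion of \cite{AMB}. Second, in the equality case you cannot directly produce ``a family of nearby free boundary minimal disks'': since $\varphi\equiv 1$ lies in the kernel of the Jacobi operator with its Robin boundary condition, the implicit function theorem produces leaves $\Sigma_t$ that are free boundary disks of constant mean curvature $H(t)$, not minimal ones. One must then show, using the first variation of area, the fact that each $\Sigma_t\in\mathcal{F}_M$, and the inequality (\ref{equ0}) applied to each leaf, that $H(t)\equiv 0$ and that every leaf is again area-minimizing; only then do the local product structure and the continuation argument on the universal cover go through as you indicate.
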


A question that arises here is the following: {\it Is it possible to obtain similar result for high co-dimension?}  Unfortunately, a general result cannot be true as we can see with the following example. Consider $(M,g)=(\mathbb{S}^2_+(r)\times\mathbb{S}^m(R), h_0+g_0),$ where  $(\mathbb{S}^2_+(r),h_0)$ is the half $2$-sphere of radius $r$ with the standard metric, and $(\mathbb{S}^m(R),g_0)$  is the $m$-sphere of radius $R$ with the standard metric, $m\geq 2$. This case, we have that
\[
\frac{1}{2}\inf R_g^M \mathcal{A}(M,g)+\inf H_g^{\partial M}\mathcal{L}(M,g) >2\pi.\]

On the other hand, consider $(M,g)=(\mathbb{S}^2_+(r)\times T^m, g_0+\delta),$ where $(T^m,\delta)$  is the flat $m$-torus, $m\geq 2$. Note that the equality holds in (\ref{equ0}). However, we can see that in this case the universal covering of $(M,g)$ is isometric to $(\mathbb{S}^2_+(r)\times \mathbb{R}^m, g_0+\delta_0)$, where $\delta_0$ is a standard metric in $\mathbb{R}^m$.

In the first example above, note that there is no map $F:(M,\partial M)\rightarrow (\mathbb{D}^2\times T^n,\partial\mathbb{D}^2\times T^n)$ with non-zero degree. However, this is a condition that we need in order to obtain a similar result as in \cite{AMB}. Our main result of this work is the following.

\begin{theorem}
Let $(M,\partial M,g)$ be a Riemannian $(n+2)$-manifold, $3\leq n+2\leq 7$, with positive scalar curvature and mean convex boundary. Assume that there is a map $F:(M,\partial M)\rightarrow (\mathbb{D}^2\times T^n,\partial\mathbb{D}^2\times T^n)$ with non-zero degree. Then, 
\begin{equation}\label{EQ1}
\frac{1}{2}\inf R_g^M \mathcal{A}(M,g)+\inf H_g^{\partial M}\mathcal{L}(M,g)\leq 2\pi.
\end{equation}
Moreover, if the boundary $\partial M$ is totally geodesic and the equality holds in $(\ref{EQ1})$, then the universal covering of $(M,g)$ is isometric to $(\mathbb{R}^n\times \Sigma_0, \delta+g_0)$, where $\delta$ is the standard metric in $\mathbb{R}^n$ and $(\Sigma_0,g_0)$ is a disk with constant Gaussian curvature $\frac{1}{2}\inf R^M_g$ and $\partial\Sigma_0$ has null geodesic curvature in $(\Sigma_0,g_0)$.
\end{theorem}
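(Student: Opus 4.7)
The plan is to extend Ambr\'ozio's two-dimensional Gauss--Bonnet/stability argument \cite{AMB} to dimension $n+2\le 7$ via iterated Schoen--Yau-type dimensional reduction, the non-zero degree of $F$ supplying the topological data needed to descend from $M$ to a stable free-boundary minimal $2$-disk in $\mathcal{F}_M$. Write $F=(F_1,F_2)$ with $F_2\colon M\to T^n=(S^1)^n$, and set $\beta_i=F_2^{*}[d\theta_i]\in H^1(M;\mathbb{Z})$; the hypothesis $\deg F\ne 0$ forces $\beta_1\smile\cdots\smile\beta_n$ to pair non-trivially with the relative class pulled back from $(\mathbb{D}^2,\partial\mathbb{D}^2)$. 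Starting from $M_0:=M$, I would inductively take $M_k\subset M_{k-1}$ to be a smooth free-boundary stable minimal hypersurface meeting $\partial M_{k-1}$ orthogonally and Poincar\'e--Lefschetz dual to $\beta_k|_{M_{k-1}}$; such $M_k$ exists and is smooth by standard geometric measure theory (valid since $\dim M_{k-1}\le 7$), with the mean convexity of $\partial M_{k-1}$ serving as a barrier. After $n$ steps, $M_n$ is a two-dimensional stable free-boundary minimal surface inside the three-manifold $M_{n-1}$.

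The key technical ingredient is to propagate the scalar-curvature and mean-curvature bounds through this tower. My plan is to equip each $M_k$ not with the induced metric but with a conformally/warped modification built from the first positive Jacobi eigenfunction $\phi_k$ on $M_k$ (with a Robin-type boundary condition derived from the second fundamental form of $\partial M_{k-1}$), adapting Zhou's warping construction \cite{JZ} from the closed setting to the free-boundary one. Stability of $M_k$ together with mean convexity of $\partial M_{k-1}$ force the first eigenvalue to be non-negative and $\phi_k>0$, so the construction is valid at every level and yields a modified scalar curvature on $M_k$ bounded below by $\inf R_g^M$ and a modified boundary mean curvature bounded below by $\inf H_g^{\partial M}$.

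At the bottom of the tower, $F_1|_{M_n}$ has non-zero relative degree onto $(\mathbb{D}^2,\partial\mathbb{D}^2)$, so some connected component $\Sigma_0$ of $M_n$ is a disk with $\partial\Sigma_0$ homotopically non-trivial in $\partial M$, i.e.\ $\Sigma_0\in\mathcal{F}_M$. Running Ambr\'ozio's computation (Gauss--Bonnet $+$ Gauss equation $+$ stability inequality with test function $\phi\equiv 1$) on $\Sigma_0\subset M_{n-1}$ with the modified metric then produces
\[
\tfrac{1}{2}\inf R_g^M\,|\Sigma_0|_g+\inf H_g^{\partial M}\,|\partial\Sigma_0|_g+\tfrac{1}{2}\int_{\Sigma_0}|A|^2\,dA\le 2\pi,
\]
so dropping the non-negative $|A|^2$-term together with the estimates $\mathcal{A}(M,g)\le|\Sigma_0|_g$ and $\mathcal{L}(M,g)\le|\partial\Sigma_0|_g$ gives \eqref{EQ1}.

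For rigidity, equality in \eqref{EQ1} with $\partial M$ totally geodesic saturates every intermediate inequality: $\Sigma_0$ is totally geodesic with constant Gaussian curvature $\tfrac{1}{2}\inf R_g^M$ and $k_g\equiv 0$ on $\partial\Sigma_0$; each $M_k$ is totally geodesic in $M_{k-1}$ with constant Jacobi eigenfunction $\phi_k$. Each saturated step then supplies a parallel unit normal vector field on $M_k$, and assembling the $n$ orthogonal parallel directions together with passage to the universal cover (where the $T^n$-loops unwrap to $\mathbb{R}^n$) yields, via the de~Rham decomposition theorem, the isometry $\widetilde M\cong\mathbb{R}^n\times\Sigma_0$ with flat Euclidean factor as claimed. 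The hardest part of the program is the curvature-transport step: setting up the conformal/warped construction at each level with a boundary condition that simultaneously preserves lower bounds on both the ambient scalar curvature and the boundary mean curvature has no analogue in the closed case of \cite{JZ} or in the one-step three-dimensional setting of \cite{AMB}, and is where the substantive new work lies.
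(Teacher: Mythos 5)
Your plan for the inequality half is essentially the paper's own argument: an iterated Schoen--Yau free-boundary minimal slicing driven by the non-zero degree of $F$ (preimages of regular circle coordinates, minimization in the relative homology class, choice of a connected component still carrying non-zero degree), eigenfunction weights/warpings to transport the curvature hypotheses down the tower, the conclusion that the bottom slice is a disk with $\partial\Sigma_2$ homotopically non-trivial in $\partial M$, and Gauss--Bonnet plus the divergence theorem to get $\tfrac12\inf R^M_g|\Sigma_2|_g+\inf H^{\partial M}_g|\partial\Sigma_2|_g\le 2\pi$. One caution on the inductive step: in the paper each $\Sigma_j$ minimizes the \emph{weighted} volume $V_{\rho_{j+1}}$ with $\rho_{j+1}=u_{j+1}\cdots u_{n+1}$, not the plain area of the induced metric with a conformal change made afterwards; if you literally take unweighted area-minimizers dual to $\beta_k|_{M_{k-1}}$ and only then warp, the curvature-propagation identities (Lemmas \ref{l2}--\ref{15} and Proposition \ref{p1}) do not go through, so the minimization and the warping must be coupled as in the weighted functional. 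With that fixed, this half is the same route as the paper.

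The genuine gap is in your rigidity step. Equality in (\ref{EQ1}) only yields slice-level information: $u_p|_{\Sigma_2}$ constant, $R_2=\inf R^M_g$, $H^{\partial\Sigma_2}=\inf H^{\partial M}_g$, and infinitesimal rigidity of the particular minimizing slicing. The unit normals $\nu_j$ are defined only along the submanifolds of that one slicing, not on an open subset of $M$, so there is nothing yet for the de~Rham decomposition theorem to act on; ``assembling $n$ orthogonal parallel directions'' presupposes exactly the global splitting you are trying to prove. Closing this gap in the free-boundary setting would require the standard but substantial neighborhood argument (construct a foliation by free-boundary minimal/CMC slicings near the minimizer, show every leaf is again minimizing and saturates the inequality, deduce a local product structure, then pass to the universal cover), which your sketch compresses into one sentence. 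The paper takes a genuinely different and shorter route here: since $\partial M$ is totally geodesic it doubles $(M,g)$ (with a Gromov--Lawson smoothing), observes that $DF:DM\to \mathbb{S}^2\times T^n$ still has non-zero degree, shows equality in (\ref{EQ1}) forces equality in the closed inequality of \cite{JZ} for $DM$, and imports Zhou's rigidity theorem to obtain the $\mathbb{R}^n\times\Sigma_0$ splitting. Either carry out the foliation/splitting analysis in detail or use the doubling reduction; as written, your rigidity argument is not a proof.
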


This work is organised as follows. In Section 2, we present some
auxiliaries results to be used in the proof of the main results. In Section
3, we present the proof of the inequality in our main theorem 1.2. Finally, in Section 4, we present the proof of the rigidity part for the case where the equality is achieved and the manifold has totally geodesic boundary.

\subsection*{Acknowledgments} 
The first author was partially supported by CNPq-Brazil (Grant 312598/2018-1). The second author was partially supported by CAPES-Brazil (Grant 88882.184181/2018-01) and CNPq-Brazil (Grant 141904/2018-6).

\section{Free boundary minimal $k$-slicings}

All the manifolds considered here are compact and orientable. 

\subsection{Definition and Examples}
Let $(M,\partial M,g)$ be a Riemannian $n$-manifold. Assume there is a properly embedded free-boundary smooth hypersurface $\Sigma_{n-1}\subset M$ which minimizes volume in $(M,g)$. Choose $u_{n-1}>0$ a first eigenfunction for the second variation $S_{n-1}$ of the volume of $\Sigma_{n-1}$ in $(M,g)$. Define $\rho_{n-1}=u_{n-1}$ and the weighted volume functional $V_{\rho_{n-1}}$ for hypersurfaces of  $\Sigma_{n-1}$, 

\[V_{\rho_{n-1}}(\Sigma)=\int_{\Sigma} \rho_{n-1}dv_{\Sigma},\]
\noindent where $dv_{\Sigma}$ is the volume form in $(\Sigma,g)$. Assume there is a properly embedded free-boundary smooth hypersurface $\Sigma_{n-2}\subset\Sigma_{n-1}$ which minimizes the weighted volume functional $V_{\rho_{n-1}}$. Choose a first eigenfunction $u_{n-2}>0$ for the second variation $S_{n-2}$ of the weighted volume functional $V_{\rho_{n-1}}$ in $\Sigma_{n-2}$. Define $\rho_{n-2}=\rho_{n-1}u_{n-2}$. Assume that we can keep doing this, inductively. Hence, we obtain a family of free-boundary minimal smooth submanifolds
\[
\Sigma_k\subset \Sigma_{k+1}\subset\cdots\subset\Sigma_{n-1}\subset(\Sigma_n,g):=(M,g),
\]
which was constructed by choosing, for each $j\in\{k,\cdots,n-1\}$, a properly embedded free-boundary smooth hypersurface  $\Sigma_j\subset\Sigma_{j+1}$ which minimizes the weighted volume functional $V_{\rho_{j+1}}$, where $\rho_{j+1}:=\rho_{j+2}u_{j+1}=u_{j+1}u_{j+2}\cdots u_{n-1}$. We call such family of free-boundary minimal hypersurfaces a {\it free-boundary minimal $k$-slicing} in $(M,g)$.

\begin{example} Let $(N,\partial N,g)$ be a  Riemannian $k$-manifold. Consider the following Riemannian $n$-manifold $(N\times T^{n-k},g+\delta)$, where $\delta$ is the flat metric on the torus $T^{n-k}$. The family of smooth hypersurfaces   
\[
N\subset N\times S^1\subset N\times T^2\subset \cdots \subset N\times T^{n-k-1}\subset (N\times T^{n-k},g+\delta),
\]
where $\rho_j\equiv u_j\equiv 1$, for every $j=k,\cdots ,n-1$, is a free-boundary minimal $k$-slicing in $(N\times T^{n-k},g+\delta)$.
\end{example}

\subsection{Geometric formulas for free-boundary minimal $k$-slincing}

Let $(M,\partial M,g)$ be a Riemannian $n$-manifold. Consider a free-boundary $k$-slicing in $M$:
\[ 
\Sigma_k\subset \cdots \subset \Sigma_{n-1}\subset (\Sigma_n,g):=(M,g).
\]

{\bf Notation:}

\begin{itemize}

\item $Ric_j$:= Ricci curvature of $(\Sigma_j,g)$.
\item $R_j$:= Scalar curvature of $(\Sigma_j,g)$.
\item $\nu_j$:= Unit vector field of $\Sigma_j$ in $(\Sigma_{j+1},g)$.
\item $B_j$:= Second fundamental form of $\Sigma_j$ in $(\Sigma_{j+1},g)$.
\item $H_j$:= Mean curvature of $\Sigma_j$ in $(\Sigma_{j+1},g)$
\item $\eta_j$:= Outward unit vector field on the boundary $\partial\Sigma_j$ in $(\Sigma_j,g)$.
\item $B^{\partial \Sigma_j}$:= Second fundamental form of $\partial\Sigma_j$ in $(\Sigma_j,g)$ with respect to $\eta_j$.
\item $H^{\partial \Sigma_j}$:= Mean curvature of $\partial\Sigma_j$ in $(\Sigma_j,g)$ with respect to $\eta_j$.
\end{itemize}

\begin{remark}\label{obs1} Since $\Sigma_j$ is a free-boundary hypersurface in $(\Sigma_{j+1},g)$, for every $ j=k,\cdots,n-1$, we have that
\begin{enumerate}
\item $\eta_j=\eta_p$ in $\partial\Sigma_j$, for every $p\geq j$.
\item $H^{\partial \Sigma_j}=H^{\partial \Sigma_{j+1}}-B^{\partial \Sigma_{j+1}}(\nu_j,\nu_j)=H^{\partial M}-\displaystyle\sum_{p=j}^{n-1}B^{\partial \Sigma_{p+1}}(\nu_p,\nu_p).$
\end{enumerate}
\end{remark}

For each $j\in\{k,\cdots,n-1\}$, define on $\Sigma_j\times T^{n-j}$ the Riemannian metric 
\[ 
\hat{g}_j=g+\sum_{p=j}^{n-1}u_p^2dt_p^2.
\]

We define
\[\hat{\Sigma}_j=\Sigma_j\times T^{n-j} \ \ \text{and} \ \ \tilde{\Sigma}_j=\Sigma_j\times T^{n-j-1}.\]

Note that, since $\Sigma_j$ is free boundary hypersurafce in $(\Sigma_{j+1},g)$ we have that $\tilde{\Sigma}_j$ is free boundary hypersurface in $(\hat{\Sigma}_{j+1},\hat{g}_{j+1})$. With the next lemmas and propositions, we will prove that $\Sigma_j \times T^{n-j-1}$ in a free-boundary stable minimal hypersurface in $(\hat{\Sigma}_{j+1},\hat{g}_{j+1})$.


\begin{lemma}\label{l2} For every $j=k,\cdots,n-1$, the second fundamental form $\tilde{B}_j$ of $\tilde{\Sigma}_j$ in $(\hat{\Sigma}_{j+1},\hat{g}_{j+1})$ is given by
\[
\tilde{B}_j=B_j-\sum_{p=j+1}^{n-1}u_p\nu_j(u_p)dt_p^2.\]
In particular,
\[
|\tilde{B}_j|^2=|B_j|^2+\sum_{p=j+1}^{n-1}(\nu_j(\log u_p))^2.
\]
\end{lemma}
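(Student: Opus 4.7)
The plan is to view $(\hat{\Sigma}_{j+1}, \hat{g}_{j+1}) = (\Sigma_{j+1} \times T^{n-j-1}, g + \sum_{p=j+1}^{n-1} u_p^2 dt_p^2)$ as a multiply warped product, with base $(\Sigma_{j+1}, g)$ and fibers $S^1$ warped by the functions $u_p$ (all of which depend only on base variables). The hypersurface $\tilde{\Sigma}_j = \Sigma_j \times T^{n-j-1}$ then splits as ``base hypersurface times total fiber.'' Since the torus directions are $\hat{g}_{j+1}$-orthogonal to the base and $\nu_j$ is already a unit vector in $(\Sigma_{j+1},g)$ orthogonal to $\Sigma_j$, the vector $\nu_j$ is also the unit normal to $\tilde{\Sigma}_j$ in $(\hat{\Sigma}_{j+1}, \hat{g}_{j+1})$.

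The bulk of the calculation is then a direct application of the Koszul formula to $\hat{g}_{j+1}$. I would split it into four cases, using $\tilde{B}_j(X,Y) = \hat{g}_{j+1}(\hat{\nabla}_X Y, \nu_j)$:
\begin{itemize}
\item For $X, Y$ tangent to $\Sigma_j$, the Koszul formula shows $\hat{\nabla}_X Y$ agrees with the base Levi-Civita connection of $g$, giving $\tilde{B}_j(X,Y) = B_j(X,Y)$.
\item For $X$ tangent to $\Sigma_j$ and $Y = \partial_{t_p}$ with $p \ge j+1$, Koszul yields $\hat{\nabla}_X \partial_{t_p} = \frac{X(u_p)}{u_p}\partial_{t_p}$, which is parallel to $\partial_{t_p}$ and hence $\hat{g}_{j+1}$-orthogonal to $\nu_j$.
\item For $X = \partial_{t_p}$, $Y = \partial_{t_q}$ with $p \ne q$, the independence of the warping factors gives $\hat{\nabla}_{\partial_{t_p}} \partial_{t_q} = 0$.
\item For $X = Y = \partial_{t_p}$, the standard warped-product identity gives $\hat{\nabla}_{\partial_{t_p}} \partial_{t_p} = -u_p \nabla^g u_p$, where $\nabla^g u_p$ is the base gradient; pairing with $\nu_j$ yields $\tilde{B}_j(\partial_{t_p},\partial_{t_p}) = -u_p\, \nu_j(u_p)$.
\end{itemize}
Assembling these contributions as bilinear forms produces exactly $\tilde{B}_j = B_j - \sum_{p=j+1}^{n-1} u_p\, \nu_j(u_p)\, dt_p^2$.

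For the norm, I would evaluate $|\tilde{B}_j|^2$ in a $\hat{g}_{j+1}$-orthonormal frame. Taking any $g$-orthonormal frame $\{e_i\}$ of $\Sigma_j$ together with the rescaled vectors $\hat{e}_p := u_p^{-1}\partial_{t_p}$ for $p \ge j+1$ (which are unit with respect to $\hat{g}_{j+1}$), the base block contributes $|B_j|^2$, the mixed terms vanish by the second and third cases above, and each diagonal torus entry becomes $\tilde{B}_j(\hat{e}_p,\hat{e}_p) = -u_p^{-1}\nu_j(u_p) = -\nu_j(\log u_p)$. Squaring and summing gives $|\tilde{B}_j|^2 = |B_j|^2 + \sum_{p=j+1}^{n-1} (\nu_j(\log u_p))^2$.

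The computation is essentially mechanical once the warped-product structure is recognised; the only place to be careful is keeping the sign convention for $B_j$ consistent and remembering that, because each $u_p$ is a function on the base alone, the $\partial_{t_p}$-derivatives drop out, so none of the fiber-coordinate terms survive beyond the radial warped-product identity.
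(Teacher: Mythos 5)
Your computation is correct: identifying $\nu_j$ as the unit normal of $\tilde{\Sigma}_j$ in $(\hat{\Sigma}_{j+1},\hat{g}_{j+1})$ and running the standard multiply warped-product connection identities case by case gives exactly $\tilde{B}_j=B_j-\sum_{p=j+1}^{n-1}u_p\nu_j(u_p)\,dt_p^2$, and evaluating in the frame $\{e_i\}\cup\{u_p^{-1}\partial_{t_p}\}$ gives the stated norm identity. The paper states Lemma \ref{l2} without proof, but your argument is precisely the computation the authors rely on implicitly (it is consistent with their trace computation of $\tilde{H}_j$ in the proof that $\tilde{\Sigma}_j$ is minimal), so there is nothing to add.
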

\begin{lemma}\label{l21} For every $j=k,\cdots,n-1$, the second fundamental form $\hat{B}_{j+1}$ of $\partial\hat{\Sigma}_{j+1}$ in $(\hat{\Sigma}_{j+1},\hat{g}_{j+1})$ with respect to $\eta_j$ satisfies
\[
\hat{B}_{j+1}(\nu_{j},\nu_{j})=B^{\partial\Sigma_{j+1}}(\nu_{j},\nu_j).
\]
\end{lemma}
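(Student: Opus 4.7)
The plan is to reduce the claim to a standard identity about Levi--Civita connections of warped product metrics, together with the free-boundary geometry already recorded in Remark \ref{obs1}. The key observation is that the warping functions $u_p$ appearing in $\hat g_{j+1} = g + \sum_{p=j+1}^{n-1} u_p^2\, dt_p^2$ depend only on points of $\Sigma_{j+1}$, and the vectors $\nu_j$ and $\eta_j$ that appear in the statement are both tangent to the $\Sigma_{j+1}$-factor of $\hat\Sigma_{j+1} = \Sigma_{j+1}\times T^{n-j-1}$.

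First I would verify that these two vectors live where the statement requires. By the definition of $\eta_j$ as the outward unit normal of $\partial\Sigma_j$ in $(\Sigma_j,g)$, it is tangent to $\Sigma_j\subset\Sigma_{j+1}$, and Remark \ref{obs1}(1) gives $\eta_j=\eta_{j+1}$, so $\eta_j$ is the outward unit normal of $\partial\hat\Sigma_{j+1}$ in $(\hat\Sigma_{j+1},\hat g_{j+1})$ because the torus directions are tangent to $\partial\hat\Sigma_{j+1}$ and are $\hat g_{j+1}$-orthogonal to $\eta_j$ (the cross terms vanish by construction of $\hat g_{j+1}$). Unit length is automatic since $\hat g_{j+1}(\eta_j,\eta_j)=g(\eta_j,\eta_j)=1$. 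For $\nu_j$, the free-boundary condition of $\Sigma_j$ in $\Sigma_{j+1}$ forces $\nu_j\perp\eta_{j+1}$ along $\partial\Sigma_j$, so $\nu_j$ is tangent to $\partial\Sigma_{j+1}$ and hence to $\partial\hat\Sigma_{j+1}$, while by construction it is tangent to $\Sigma_{j+1}$.

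Next, unwinding the definition of the second fundamental form with the appropriate sign convention,
\[
\hat B_{j+1}(\nu_j,\nu_j) \;=\; \hat g_{j+1}\bigl(\hat\nabla_{\nu_j}\nu_j,\,\eta_j\bigr),
\]
where $\hat\nabla$ denotes the Levi--Civita connection of $\hat g_{j+1}$. Applying the Koszul formula in the warped product $(\Sigma_{j+1}\times T^{n-j-1},\hat g_{j+1})$, one verifies that for any triple $X,Y,Z$ of vectors tangent to the $\Sigma_{j+1}$-factor, all contributions involving the warping terms $u_p^2\, dt_p^2$ drop out, so that
\[
\hat g_{j+1}\bigl(\hat\nabla_X Y,\,Z\bigr) \;=\; g\bigl(\nabla^g_X Y,\,Z\bigr).
\]
Taking $X=Y=\nu_j$ and $Z=\eta_j=\eta_{j+1}$, all of which are horizontal, I obtain
\[
\hat B_{j+1}(\nu_j,\nu_j) \;=\; g\bigl(\nabla^g_{\nu_j}\nu_j,\,\eta_{j+1}\bigr) \;=\; B^{\partial\Sigma_{j+1}}(\nu_j,\nu_j),
\]
which is the asserted identity.

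The only potential obstacle is the careful bookkeeping in the warped product Koszul computation, but this reduces to the elementary observations that $[\partial_{t_p},X]=0$ whenever $X$ is horizontal, that $\partial_{t_p}(u_q)=0$ for all $p,q$, and that $\hat g_{j+1}(\partial_{t_p},Z)=0$ for horizontal $Z$. There is no analytic or topological difficulty here: the lemma is a purely local compatibility statement between the second fundamental forms computed with respect to $g$ on $\Sigma_{j+1}$ and with respect to $\hat g_{j+1}$ on $\hat\Sigma_{j+1}$, and all the non-horizontal pieces of $\hat g_{j+1}$ decouple from the quantities of interest.
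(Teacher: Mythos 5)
Your proposal is correct: the identity does follow from the fact that in the multiply warped metric $\hat g_{j+1}=g+\sum_{p=j+1}^{n-1}u_p^2\,dt_p^2$ the slice $\Sigma_{j+1}\times\{\text{pt}\}$ is totally geodesic (so $\hat g_{j+1}(\hat\nabla_XY,Z)=g(\nabla^g_XY,Z)$ for horizontal $X,Y,Z$), combined with the free-boundary facts that $\eta_j=\eta_{j+1}$ is the outward $\hat g_{j+1}$-unit normal of $\partial\hat\Sigma_{j+1}$ and that $\nu_j$ is tangent to $\partial\Sigma_{j+1}$ along $\partial\Sigma_j$. The paper states Lemma \ref{l21} without proof, treating it as a routine computation, and your argument is precisely the standard verification the authors presumably intend, so there is nothing substantive to contrast.
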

\begin{lemma}\label{l22} For every $j=k,\cdots,n-1$, the Ricci Tensor $Ric_{\hat{g}_{j+1}}$ of $(\hat{\Sigma}_{j+1},\hat{g}_{j+1})$ satisfies
\[Ric_{\hat{g}_{j+1}}(\nu_j,\nu_j)=Ric_{j+1}(\nu_j,\nu_j)- \sum_{p=j+1}^{n-1} \frac{1}{u_p} \left(\nabla_{j+1}^2u_p\right)(\nu_j,\nu_j)\]
\noindent where $\nabla_{j+1}^2$ is the hessian in $(\Sigma_{j+1},g)$.
\end{lemma}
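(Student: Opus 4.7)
The plan is to recognize the metric $\hat g_{j+1} = g + \sum_{p=j+1}^{n-1} u_p^2\,dt_p^2$ on $\hat\Sigma_{j+1} = \Sigma_{j+1}\times T^{n-j-1}$ as a multiply warped product with base $(\Sigma_{j+1}, g)$ and one-dimensional fibers $(S^1, dt_p^2)$, each warped by $u_p$, where $u_p$ depends only on the base coordinates. The claimed identity is then the standard multiply warped-product Ricci formula applied to $X = Y = \nu_j$, which is tangent to the base; since each fiber has dimension one, the usual factor $\dim(F_p)/u_p$ becomes $1/u_p$.

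Concretely, I would first set up an adapted orthonormal frame near a point of $\Sigma_{j+1}$: pick an orthonormal frame $\{e_1,\dots,e_{j+1}\}$ for $(\Sigma_{j+1}, g)$ and define $\hat e_p = u_p^{-1}\partial_{t_p}$ for $p = j+1,\dots,n-1$. Together these give an $\hat g_{j+1}$-orthonormal frame on $\hat\Sigma_{j+1}$. Using $[\partial_{t_p}, X] = 0$ for $X$ tangent to $\Sigma_{j+1}$ and $[\partial_{t_p},\partial_{t_q}] = 0$, the Koszul formula yields, for $X, Y$ tangent to $\Sigma_{j+1}$, the relations
\[
\hat\nabla_X Y = \nabla_X Y, \qquad \hat\nabla_X \partial_{t_p} = \frac{X(u_p)}{u_p}\partial_{t_p}, \qquad \hat\nabla_{\partial_{t_p}}\partial_{t_p} = -u_p\,\nabla_{j+1} u_p,
\]
with all cross-fiber terms vanishing because the $\partial_{t_p}$-directions are $\hat g_{j+1}$-orthogonal and each $u_p$ is independent of the torus coordinates.

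Next I would expand
\[
Ric_{\hat g_{j+1}}(\nu_j,\nu_j) = \sum_{i=1}^{j+1}\langle \hat R(e_i,\nu_j)\nu_j, e_i\rangle + \sum_{p=j+1}^{n-1}\langle \hat R(\hat e_p,\nu_j)\nu_j, \hat e_p\rangle.
\]
Since $\hat\nabla$ reduces to $\nabla_{j+1}$ on base-tangent vector fields, the first sum equals $Ric_{j+1}(\nu_j,\nu_j)$. For each fiber direction $p$, expanding
\[
\hat R(\hat e_p,\nu_j)\nu_j = \hat\nabla_{\hat e_p}\hat\nabla_{\nu_j}\nu_j - \hat\nabla_{\nu_j}\hat\nabla_{\hat e_p}\nu_j - \hat\nabla_{[\hat e_p,\nu_j]}\nu_j
\]
with the connection formulas above, and retaining only the $\partial_{t_p}$-component, gives $\langle \hat R(\hat e_p,\nu_j)\nu_j, \hat e_p\rangle = -u_p^{-1}(\nabla_{j+1}^2 u_p)(\nu_j,\nu_j)$. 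Summing over $p$ delivers the stated identity.

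The only delicate step is the last computation: tracking the cancellations between $\hat\nabla_{\hat e_p}\hat\nabla_{\nu_j}\nu_j$ and $\hat\nabla_{\nu_j}\hat\nabla_{\hat e_p}\nu_j$ and identifying the surviving piece with a Hessian of $u_p$ in the base metric. Once this is done carefully, the rest is bookkeeping that follows automatically from the warped-product structure, so I do not expect any substantial obstacle beyond sign and index tracking.
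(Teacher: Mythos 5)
Your computation is correct: the connection formulas, the splitting of the Ricci trace into base and fiber directions, and the identification $\langle \hat R(\hat e_p,\nu_j)\nu_j,\hat e_p\rangle = -u_p^{-1}(\nabla_{j+1}^2 u_p)(\nu_j,\nu_j)$ all check out, and this is exactly the standard multiply warped-product Ricci formula with one-dimensional fibers that the lemma asserts. The paper states Lemma \ref{l22} without proof (treating it as a standard warped-product fact), so your argument supplies precisely the computation the authors implicitly rely on, with no divergence in approach.
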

\begin{Pro} For every $j=k,\cdots,n-1$, $\tilde{\Sigma}_j$ is a free boundary minimal hypersurfaces in $(\hat{\Sigma}_{j+1},\hat{g}_{j+1})$.
\end{Pro}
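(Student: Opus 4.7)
The plan is to reinterpret the weighted‐volume variational problem for $\Sigma_j \subset \Sigma_{j+1}$ as the ordinary area variational problem for the lift $\tilde{\Sigma}_j = \Sigma_j \times T^{n-j-1}$ inside $(\hat{\Sigma}_{j+1}, \hat{g}_{j+1})$. The key observation is that the induced metric on $\tilde{\Sigma}_j$ is block-diagonal, with torus factor having volume element $u_{j+1}\cdots u_{n-1}\,dt_{j+1}\cdots dt_{n-1}$; integrating out the torus variables therefore gives $|\tilde{\Sigma}_j|_{\hat{g}_{j+1}} = (2\pi)^{n-j-1}\, V_{\rho_{j+1}}(\Sigma_j)$, where $\rho_{j+1}=u_{j+1}\cdots u_{n-1}$. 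Under this identification, critical points of $V_{\rho_{j+1}}$ correspond to $T^{n-j-1}$-invariant critical points of ordinary area on $\hat{\Sigma}_{j+1}$, and since $\tilde{H}_j$ is itself $T^{n-j-1}$-invariant, vanishing against invariant test functions is equivalent to vanishing identically.

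First I would verify the free-boundary condition geometrically. The vector $\nu_j$ is horizontal with respect to the product structure, so its lift serves as the unit normal of $\tilde{\Sigma}_j$ inside $(\hat{\Sigma}_{j+1}, \hat{g}_{j+1})$; likewise $\eta_{j+1}$ is horizontal and remains the outward conormal to $\partial \hat{\Sigma}_{j+1}$ in $\hat{g}_{j+1}$. Because the product structure is orthogonal and $\Sigma_j$ meets $\partial \Sigma_{j+1}$ orthogonally by construction, one has $\langle \nu_j, \eta_{j+1}\rangle_{\hat{g}_{j+1}} = \langle \nu_j, \eta_{j+1}\rangle_g = 0$, so $\tilde{\Sigma}_j$ has free boundary in $\hat{\Sigma}_{j+1}$.

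Next I would prove minimality by direct computation from Lemma \ref{l2}. Tracing $\tilde{B}_j = B_j - \sum_{p=j+1}^{n-1} u_p\nu_j(u_p)\,dt_p^2$ against the induced metric on $\tilde{\Sigma}_j$: the $\Sigma_j$-directions contribute $H_j$, while each $\partial_{t_p}/u_p$-direction contributes $-u_p\nu_j(u_p)/u_p^2 = -\nu_j(\log u_p)$, giving $\tilde{H}_j = H_j - \sum_{p=j+1}^{n-1}\nu_j(\log u_p) = H_j - \nu_j(\log \rho_{j+1})$. On the other hand, by construction $\Sigma_j$ is a critical point of $V_{\rho_{j+1}}$ among hypersurfaces of $\Sigma_{j+1}$ with free boundary on $\partial \Sigma_{j+1}$. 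The first variation of $\int_{\Sigma_t}\rho_{j+1}\,dv$ in a normal direction $\phi\nu_j$ is $\int_{\Sigma_j}\bigl(\nu_j(\rho_{j+1}) - H_j \rho_{j+1}\bigr)\phi\,dv$ plus a boundary term that vanishes by the free-boundary condition, so criticality forces the Euler–Lagrange equation $H_j = \nu_j(\log \rho_{j+1})$. Substituting gives $\tilde{H}_j \equiv 0$.

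The one subtlety I expect to need care about is the sign convention for the second fundamental form: one must confirm that the convention used in Lemma \ref{l2} is the one under which the Euler–Lagrange equation for $V_{\rho_{j+1}}$ reads $H_j = \nu_j(\log \rho_{j+1})$ rather than its negative, so that the two terms in the expression for $\tilde{H}_j$ cancel rather than add. Beyond this bookkeeping, the proof is a two-line computation combining Lemma \ref{l2} with the definition $\rho_{j+1} = u_{j+1}\cdots u_{n-1}$.
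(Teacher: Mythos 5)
Your proposal is correct and follows essentially the same route as the paper: trace Lemma \ref{l2} to get $\tilde{H}_j = H_j - \nu_j(\log\rho_{j+1})$ and then cancel this using the Euler--Lagrange equation $H_j = \langle\nabla_{j+1}\log\rho_{j+1},\nu_j\rangle$ coming from the weighted minimality of $\Sigma_j$ (which the paper simply cites rather than rederiving from the first variation). Your explicit check of the free-boundary condition corresponds to the paper's remark, stated just before the proposition, that $\tilde{\Sigma}_j$ inherits the free-boundary property from $\Sigma_j$.
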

\begin{proof}
Denote by $\tilde{H}_j$ the mean curvature of $\tilde{\Sigma}_j$ in $(\hat{\Sigma}_{j+1},\hat{g}_{j+1})$. Consider $(x_1,\cdots, x_j, t_{j+1}, \cdots, t_{n-1})$ a local chart in $\tilde{\Sigma}_j$ such that $(x_1,\cdots, x_j)$ is a local chart in $\Sigma_j$. It follows from Lemma \ref{l2} that 
\begin{eqnarray*}
\tilde{H}_j &=& \sum_{i,k=1}^{n-1}\hat{g}_{j+1}^{ik}(\tilde{B}_j)_{ik}\\
&=& \sum_{i,k=1}^{j}g^{ik}(B_j)_{ik}-\sum_{p=j+1}^{n-1}\frac{\nu_j(u_p)}{u_p}\\
&=& H_j-\sum_{p=j+1}^{n-1}\nu_j(\ln u_p)\\
&=& H_j-\nu_j(\ln \rho_{j+1})\\
&=& H_j-\langle \nabla_{j+1} \ln \rho_{j+1},\nu_j\rangle \\
\end{eqnarray*}
\noindent where $\nabla_{j+1}$ is the gradient in $(\Sigma_{j+1},g)$. We have that $\Sigma_j$ minimizes the  weight volume functional $V_{\rho_{j+1}}$, in particular, the $(\ln \rho_{j+1})$-mean curvature  of $\Sigma_j$ in $(\Sigma_{j+1},g)$ vanishes everywhere, this is, $H_j=\langle \nabla_{j+1} \ln \rho_{j+1},\nu_j\rangle .$ (See \cite{LX}). This implies that $\tilde{H}_j=0$. Therefore, $\tilde{\Sigma}_j$ is a free boundary minimal hypersurfaces in $(\hat{\Sigma}_{j+1},\hat{g}_{j+1})$.
\end{proof}

Denote by $S_j$ the second variation for weight volume functional $V_{\rho_{j+1}}$ in  $\Sigma_{j}$,  $\tilde{S}_j$ the second variation for volume functional of $\tilde{\Sigma}_j$ in $(\hat{\Sigma}_{j+1},\hat{g}_{j+1})$ and $\tilde{g}_j=\left.\hat{g}_{j+1}\right|_{\tilde{\Sigma}_j}$. 

\begin{Pro} For every $j=k,\cdots,n-1$, $\tilde{\Sigma}_j$ is a free boundary stable minimal hypersurfaces in $(\hat{\Sigma}_{j+1},\hat{g}_{j+1})$.
\end{Pro}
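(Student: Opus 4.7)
The plan is to verify $\tilde{S}_j(\phi)\ge 0$ for every smooth test function $\phi$ on $\tilde{\Sigma}_j$. First I would write out the standard second variation formula for a free-boundary minimal hypersurface:
\begin{equation*}
\tilde{S}_j(\phi) = \int_{\tilde{\Sigma}_j}\!\left[|\tilde{\nabla}\phi|^2_{\tilde{g}_j} - \bigl(|\tilde{B}_j|^2 + Ric_{\hat{g}_{j+1}}(\nu_j,\nu_j)\bigr)\phi^2\right]dv_{\tilde{g}_j} - \int_{\partial\tilde{\Sigma}_j}\!\hat{B}_{j+1}(\nu_j,\nu_j)\,\phi^2\,dA_{\tilde{g}_j}.
\end{equation*}

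Next I would substitute the three preceding lemmas and invoke the pointwise identity
$$\frac{1}{u_p}(\nabla_{j+1}^2 u_p)(\nu_j,\nu_j) = (\nabla_{j+1}^2\log u_p)(\nu_j,\nu_j) + (\nu_j\log u_p)^2.$$
The key algebraic observation is that the terms $(\nu_j\log u_p)^2$ appearing inside $|\tilde{B}_j|^2$ via Lemma \ref{l2} cancel exactly against those produced when rewriting the Ricci contribution from Lemma \ref{l22}. Using $\log\rho_{j+1}=\sum_{p=j+1}^{n-1}\log u_p$, this leaves the clean formula
$$|\tilde{B}_j|^2 + Ric_{\hat{g}_{j+1}}(\nu_j,\nu_j) = |B_j|^2 + Ric_{j+1}(\nu_j,\nu_j) - (\nabla_{j+1}^2\log\rho_{j+1})(\nu_j,\nu_j),$$
while Lemma \ref{l21} turns the boundary term into $B^{\partial\Sigma_{j+1}}(\nu_j,\nu_j)\phi^2$.

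I would then decompose $|\tilde{\nabla}\phi|^2_{\tilde{g}_j} = |\nabla_{\Sigma_j}\phi(\cdot,t)|^2_g + \sum_{p=j+1}^{n-1}u_p^{-2}(\partial_{t_p}\phi)^2$, which gives the obvious lower bound by the first summand. Combined with the product volume form $dv_{\tilde{g}_j}=\rho_{j+1}\,dv_{\Sigma_j}\,dt_{j+1}\cdots dt_{n-1}$ (and the analogous factorization on $\partial\tilde{\Sigma}_j$), integration first over the slice $\Sigma_j$ yields
$$\tilde{S}_j(\phi) \ge \int_{T^{n-j-1}} S_j\!\bigl(\phi(\cdot,t)\bigr)\,dt,$$
where the integrand on the right is exactly the weighted second variation of $V_{\rho_{j+1}}$ at $\Sigma_j$ as recorded in \cite{LX}: the bulk integrand is $\rho_{j+1}[|\nabla\psi|^2 - (|B_j|^2 + Ric_{j+1}(\nu_j,\nu_j) - (\nabla_{j+1}^2\log\rho_{j+1})(\nu_j,\nu_j))\psi^2]$ with boundary term $-\rho_{j+1}B^{\partial\Sigma_{j+1}}(\nu_j,\nu_j)\psi^2$, which is precisely what the cancellation produces.

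Finally, since $\Sigma_j$ was chosen as a minimizer of $V_{\rho_{j+1}}$ among free-boundary hypersurfaces in $\Sigma_{j+1}$, one has $S_j(\psi)\ge 0$ for every admissible test function $\psi$ on $\Sigma_j$; applying this slice by slice to $\psi=\phi(\cdot,t)$ gives $\tilde{S}_j(\phi)\ge 0$. The main obstacle I anticipate is bookkeeping: matching the integrand obtained after substituting Lemmas \ref{l2}--\ref{l22} with the standard form of the free-boundary weighted second variation, which depends crucially on the $(\nu_j\log u_p)^2$ cancellation described above and on identifying $\sum_p\nabla_{j+1}^2\log u_p$ with $\nabla_{j+1}^2\log\rho_{j+1}$.
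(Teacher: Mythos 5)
Your proposal is correct and follows essentially the same route as the paper: both rest on the cancellation identity $|\tilde{B}_j|^2+Ric_{\hat{g}_{j+1}}(\nu_j,\nu_j)=|B_j|^2+Ric_{j+1}(\nu_j,\nu_j)-(\nabla_{j+1}^2\log\rho_{j+1})(\nu_j,\nu_j)$ coming from Lemmas \ref{l2} and \ref{l22}, use Lemma \ref{l21} for the boundary term, factor the volume forms as $\rho_{j+1}\,dv_j\,dt$, drop the torus-direction derivatives, and reduce slice by slice to the nonnegativity of the weighted second variation $S_j$ guaranteed by the minimization of $V_{\rho_{j+1}}$. The only difference is organizational (you start from $\tilde{S}_j$ and rewrite toward $S_j$, while the paper rewrites $S_j$ toward $\tilde{S}_j$), which is immaterial.
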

\begin{proof} Let $\varphi \in C^{\infty}(\Sigma_j)$. We have that
\begin{eqnarray*}
S_j(\varphi)&=&\int_{\Sigma_j}\left[|\nabla_j\varphi|^2-(|B_j|^2+Ric_{f_{j+1}}(\nu_j,\nu_j))\varphi^2\right]\rho_{j+1}dv_j\\
&-&\int_{\partial\Sigma_j}\varphi^2B^{\partial\Sigma_{j+1}}(\nu_j,\nu_j)\rho_{j+1}d\sigma _j
\end{eqnarray*}
\noindent where $Ric_{f_{j+1}}(\nu_j,\nu_j)=Ric_{j+1}(\nu_j,\nu_j)-(\nabla_{j+1}^2f_{j+1})(\nu_j,\nu_j)$, $f_{j+1}=\ln \rho_{j+1}$ (See \cite{LX}).  Here, $dv_j$ and $d\sigma_j$ are the volume forms of $(\Sigma_j,g)$ and $(\partial \Sigma_j,g)$, respectively.  Note that 
\begin{eqnarray*}
\nabla_{j+1}f_{j+1} &=&  \nabla_{j+1}\ln \rho_{j+1}\\
&=&\nabla_{j+1}\left(\sum_{p=j+1}^{n-1}\ln u_p\right)\\
&=&\sum_{p=j+1}^{n-1}\nabla_{j+1}\ln u_p\\
&=&\sum_{p=j+1}^{n-1}\frac{1}{u_p}\nabla_{j+1} u_p.
\end{eqnarray*}
It follows that
\begin{eqnarray*}
(\nabla_{j+1}^2f_{j+1})(\nu_j,\nu_j) &=& \left\langle \nabla_{\nu_j}\left(\nabla_{j+1}f_{j+1}\right),\nu_j\right\rangle\\
&=&  \left\langle \nabla_{\nu_j}\left( \sum_{p=j+1}^{n-1}\frac{1}{u_p}\nabla_{j+1} u_p \right),\nu_j\right\rangle\\
&=&  \sum_{p=j+1}^{n-1} \left\langle \frac{1}{u_p}\nabla_{\nu_j}\left(\nabla_{j+1} u_p\right) -\frac{\nu_j(u_p)}{u_p^2}\nabla_{j+1} u_p,\nu_j\right\rangle\\
&=& \sum_{p=j+1}^{n-1} \frac{1}{u_p} \left\langle \nabla_{\nu_j}\left(\nabla_{j+1} u_p\right),\nu_j\right\rangle- \sum_{p=j+1}^{n-1}\frac{1}{u_p^2}[\nu_j(u_p)]^2\\
&=& \sum_{p=j+1}^{n-1} \frac{1}{u_p} \left(\nabla_{j+1}^2u_p\right)(\nu_j,\nu_j)- \sum_{p=j+1}^{n-1}[\nu_j(\ln u_p)]^2\\
\end{eqnarray*}

From Lemmas \ref{l2} and \ref{l22} we obtain 
$$Ric_{f_{j+1}}(\nu_j,\nu_j)+|B_j|^2=Ric_{\hat{g}_{j+1}}(\nu_j,\nu_j)+|\tilde{B}_j|^2.$$

This implies that
\[S_j(\varphi)=\int_{\Sigma_j}\left(|\nabla_j\varphi|^2-Q_j\varphi^2\right)\rho_{j+1}dv_j-\int_{\partial\Sigma_j}\varphi^2B^{\partial\Sigma_{j+1}}(\nu_j,\nu_j)\rho_{j+1}d\sigma _j.\]
\noindent where
$$Q_j=Ric_{\hat{g}_{j+1}}(\nu_j,\nu_j)+|\tilde{B}_j|^2.$$

Consider now  $\Psi \in C^{\infty}(\tilde{\Sigma}_j)$. We have that
\begin{eqnarray*}
\tilde{S}_j(\Psi)&=&\int_{\tilde{\Sigma}_j}\left[|\nabla_{\tilde{g}_j}\Psi|^2-Q_j \Psi^2\right]dv_{\tilde{g}_j}-\int_{\partial\tilde{\Sigma}_j}\Psi^2\hat{B}_{j+1}(\nu_j,\nu_j)d\sigma _{\tilde{g}_j}\\
\end{eqnarray*}
\noindent where $dv_{\tilde{g}_j}$ and $d\sigma _{\tilde{g}_j}$ are the volume forms of $(\tilde{\Sigma_j},\tilde{g}_j)$ and $(\partial \tilde{\Sigma_j},\tilde{g}_j)$, respectively. From Lemma \ref{l21} we have that
\begin{eqnarray*}
\tilde{S}_j(\Psi) &=& \int_{\tilde{\Sigma}_j}\left(|\nabla_{\tilde{g}_j}\Psi|^2-Q_j \Psi^2\right)dv_{\tilde{g}_j} - \int_{\partial\tilde{\Sigma}_j}\Psi^2B^{\partial \Sigma_{j+1}}(\nu_j,\nu_j)d\sigma _{\tilde{g}_j}
\end{eqnarray*}

Furthermore, since $dv_{\tilde{g}_j}=\rho_{j+1}dv_j dt$ and $d\sigma _{\tilde{g}_j}=\rho_{j+1}d\sigma _j dt$, where $dt=dt_{j+2}\cdots dt_{n-1}$, we  have that
\begin{eqnarray*}
\tilde{S}_j(\Psi)&=&\int_{T^{n-j-1}}\left( \int_{{\Sigma}_j}\left(|\nabla_{\tilde{g}_j}\Psi|^2-Q_j\Psi^2\right)\rho_{j+1}dv_{j}\right)dt \\
&-&\int_{T^{n-j-1}}\left( \int_{\partial{\Sigma}_j}\Psi^2B^{\partial \Sigma_{j+1}}(\nu_j,\nu_j)\rho_{j+1}d\sigma _{j}\right)dt
\end{eqnarray*}

For each $\Psi \in C^{\infty}(\tilde{\Sigma}_j)$ define $F_{\Psi}: T^{n-j-1}\rightarrow \mathbb{R}$ by $F_{\Psi}(t)=S_j(\Psi_t)$, where for each $t\in T^{n-j-1}$ the function $\Psi_t\in C^{\infty}(\Sigma_j)$ is defined by $\Psi_t(x)=\Psi(x,t)$, $x\in\Sigma_j$. Note that
\begin{equation}\label{eq1}
\tilde{S}_j(\Psi) \geq \int_{T^{n-j-1}}F_{\Psi} dt.
\end{equation}

Since  $\Sigma_j$ minimizes the  weight volume functional  $V_{\rho_{j+1}}$ we have that $F_{\Psi}>0$ for every $\Psi \in C^{\infty}(\tilde{\Sigma}_j)$. It follows that $\tilde{S}_j(\Psi)>0$ for every $\Psi \in C^{\infty}(\tilde{\Sigma}_j)$. Hence, $\tilde{\Sigma}_j$ is a free-boundary stable minimal hypersurface in $(\hat{\Sigma}_{j+1},\hat{g}_{j+1})$.
\end{proof}

Note that the equality holds in (\ref{eq1}) if and only if $\Psi \in C^{\infty}({\Sigma}_j)$. So $S_j(\varphi)=\tilde{S}_j(\varphi),$ for every $\varphi\in C^{\infty}(\Sigma_j)$. It follows that
\begin{eqnarray*}
S_j(\varphi)&=&\int_{\Sigma_j}(|\nabla_j\varphi|^2-Q_j\varphi^2)\rho_{j+1}dv_j -\int_{\partial \Sigma_{j}}\varphi^2B^{\partial\Sigma_{j+1}}(\nu_j,\nu_j)\rho_{j+1}d\sigma_j\\
&=& -\int_{\Sigma_j}\varphi \tilde{L}_j(\varphi)\rho_{j+1}dv_j+ \int_{\partial\Sigma_j}\varphi\left(\frac{\partial \varphi}{\partial \eta_j}-\varphi B^{\partial\Sigma_{j+1}}(\nu_j,\nu_j)\right)\rho_{j+1}d\sigma_j
\end{eqnarray*}
\noindent for every $\varphi\in C^{\infty}(\Sigma_j)$, where $\tilde{L}_j:C^{\infty}(\Sigma_j)\rightarrow C^{\infty}(\Sigma_j)$ is a differential operator given by $\tilde{L}(\varphi)=\tilde{\Delta}_j\varphi+Q_j\varphi,$ where $\tilde{\Delta}_j$ denote the Laplacian operator of $(\tilde{\Sigma}_j,\hat{g}_{j+1})$.

Consider $\lambda_j$ the firt eingevalue of $S_j$ associeted the first eigenfunction $u_j$. We have that,
\begin{equation}\label{eq3}
\left\{
\begin{array}{ccccccc}
\tilde{L}_j(u_j)&=& -\lambda_ju_j \ \ \text{on} \ \ \Sigma_j\\

\displaystyle\frac{\partial u_j}{\partial \eta_j} &=& u_j B^{\partial\Sigma_{j+1}}(\nu_j,\nu_j)\ \ \text{on} \ \ \partial\Sigma_j\\

\end{array}
\right.
\end{equation}

\begin{lemma}\label{16} For every $j\leq p\leq n-1$ , we have that, in $\partial\Sigma_j$,
\[
B^{\partial \Sigma_{p+1}}(\nu_p,\nu_p)=\langle \nabla_j\log u_p, \eta_j\rangle.
\]
\end{lemma}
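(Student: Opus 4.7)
The strategy is to combine the Robin-type boundary condition coming from the eigenvalue equation (\ref{eq3}) applied at level $p$ with the fact that $\partial\Sigma_j\subset\partial\Sigma_p$ and that, by Remark \ref{obs1} (1), $\eta_j=\eta_p$ along $\partial\Sigma_j$. The only real work is converting $\nabla_p u_p$ into $\nabla_j u_p$ correctly; the key observation is that the difference lies in directions that are all orthogonal to $\eta_j$.

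First I would record the case $p=j$, which is immediate from (\ref{eq3}): on $\partial\Sigma_j$,
\[
\langle \nabla_j\log u_j,\eta_j\rangle=\frac{1}{u_j}\frac{\partial u_j}{\partial\eta_j}=B^{\partial\Sigma_{j+1}}(\nu_j,\nu_j).
\]
For $p>j$, I would apply (\ref{eq3}) at level $p$ to obtain, on $\partial\Sigma_p$,
\[
\langle \nabla_p\log u_p,\eta_p\rangle=B^{\partial\Sigma_{p+1}}(\nu_p,\nu_p),
\]
and then push this identity down to $\partial\Sigma_j\subset\partial\Sigma_p$.

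Next I would relate the two gradients. Along the flag $\Sigma_j\subset\Sigma_{j+1}\subset\cdots\subset\Sigma_p$, the vectors $\nu_j,\nu_{j+1},\ldots,\nu_{p-1}$ form an orthonormal set spanning the orthogonal complement of $T\Sigma_j$ inside $T\Sigma_p$ (each $\nu_q$ is tangent to $\Sigma_{q+1}\subset\Sigma_p$ and normal to $\Sigma_q\supset\Sigma_j$). Restricting $u_p$ to $\Sigma_j$ therefore gives
\[
\nabla_j u_p=\nabla_p u_p-\sum_{q=j}^{p-1}\langle\nabla_p u_p,\nu_q\rangle\,\nu_q.
\]
Taking the inner product with $\eta_j$ and using that $\eta_j\in T\Sigma_j$ is orthogonal to every $\nu_q$ with $q\ge j$ yields
\[
\langle\nabla_j u_p,\eta_j\rangle=\langle\nabla_p u_p,\eta_j\rangle.
\]

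Finally, by Remark \ref{obs1} (1), $\eta_j=\eta_p$ on $\partial\Sigma_j$, so dividing by $u_p>0$ and combining with the boundary identity from (\ref{eq3}) at level $p$ gives, on $\partial\Sigma_j$,
\[
\langle\nabla_j\log u_p,\eta_j\rangle=\langle\nabla_p\log u_p,\eta_p\rangle=B^{\partial\Sigma_{p+1}}(\nu_p,\nu_p),
\]
which is the claim. The only mildly delicate step is the orthogonal decomposition of $\nabla_p u_p$ along the nested free-boundary flag, but this is forced by the compatibility $\partial\Sigma_q=\Sigma_q\cap\partial M$ and the orthogonality of consecutive normals, so no further input is required.
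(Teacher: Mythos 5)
Your proposal is correct and follows essentially the same route as the paper's proof: apply the boundary condition from (\ref{eq3}) at level $p$, use $\eta_p=\eta_j$ on $\partial\Sigma_j$ from Remark \ref{obs1}, and decompose $\nabla_p\log u_p$ into its $\Sigma_j$-tangential part plus components along the normals $\nu_j,\dots,\nu_{p-1}$, which drop out since $\eta_j\perp\nu_l$. No gaps; the argument matches the paper's.
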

\begin{proof}It follows from (\ref{eq3}) that, in $\partial\Sigma_p$,
\[
B^{\partial\Sigma_{p+1}}(\nu_p,\nu_p)=\frac{1}{u_p}\frac{\partial u_p}{\partial \eta_p}=\langle \nabla_p\log u_p,\eta_p\rangle,
\]
\noindent for every $p=k,\cdots,n-1$. Consider $j\leq p\leq n-1$. Note that, in $\partial\Sigma_j$,
\[
B^{\partial\Sigma_{p+1}}(\nu_p,\nu_p)=\langle \nabla_p\log u_p,\eta_j\rangle,
\]
\noindent because we have $\eta_p=\eta_j$ in $\partial\Sigma_j$ (see remark \ref{obs1}). In $\Sigma_j$, we can write 
\[
\nabla_p\log u_p=\nabla_j\log u_p+\sum_{l=j}^{p-1}\langle \nabla_p\log u_p,\nu_l\rangle \nu_l.
\]
Hence, in $\partial\Sigma_j$, we have that
\[
B^{\partial\Sigma_{p+1}}(\nu_p,\nu_p)=\langle \nabla_j\log u_p,\eta_j\rangle +\sum_{l=j}^{p-1}\langle \nabla_p\log u_p,\nu_l\rangle\langle \nu_l,\eta_j\rangle.
\]
However, we have $\eta_j\perp \nu_l$ in $\partial\Sigma_j$, for every $j\leq l \leq n-1$. Therefore, \[
B^{\partial\Sigma_{p+1}}(\nu_p,\nu_p)=\langle \nabla_j\log u_p,\eta_j\rangle
\]
\end{proof}
\begin{lemma}\label{l4}  For $k\leq j\leq n-1$, the scalar curvature $\tilde{R}_j$ of $(\tilde{\Sigma}_j,\tilde{g}_{j})$ is given by
\[
\tilde{R}_j=R_j-2\sum_{p=j+1}^{n-1}u_p^{-1}\Delta_ju_p-2\sum_{j+1\leq p<q\leq n-1}\langle \nabla_j\log u_p,\nabla_j\log u_q\rangle.
\]

Equivalently,
\[
\tilde{R}_j=R_j-4\rho_{j+1}^{-\frac{1}{2}}\Delta_j(\rho_{j+1}^{\frac{1}{2}})-\sum_{p=j+1}^{n-1}|\nabla_j\log u_p|^2.
\]
\end{lemma}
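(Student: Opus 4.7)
The plan is to recognize $(\tilde{\Sigma}_j,\tilde{g}_j) = (\Sigma_j\times T^{n-j-1},\, g|_{\Sigma_j}+\sum_{p=j+1}^{n-1}u_p^2\,dt_p^2)$ as a multi-warped product with base $(\Sigma_j,g)$ and $(n-j-1)$ one-dimensional circle fibers with warping functions $u_{j+1},\ldots,u_{n-1}$, each $u_p$ being a function on $\Sigma_j$ only, and to compute its scalar curvature directly from the warping structure. The mixed partial derivatives of the metric coefficients vanish because the circle factors are mutually orthogonal and none of the $u_p$ depends on any torus coordinate.

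\textbf{Step 1: orthonormal frame and sectional curvatures.} Choose a local $g$-orthonormal frame $\{e_1,\ldots,e_j\}$ on $\Sigma_j$ and set $e_p:=u_p^{-1}\partial_{t_p}$ for $p=j+1,\ldots,n-1$, so that $\{e_1,\ldots,e_{n-1}\}$ is $\tilde{g}_j$-orthonormal on $\tilde{\Sigma}_j$. The Christoffel symbols of $\tilde{g}_j$ (read off directly, or from Lemma \ref{l2}) give, via standard O'Neill-type computations, the following sectional curvatures:
(i) for $a,b\le j$, $\tilde{K}(e_a,e_b)$ equals the sectional curvature of $(\Sigma_j,g)$;
(ii) for $a\le j$ and $p\ge j+1$, $\tilde{K}(e_a,e_p) = -u_p^{-1}\nabla_j^2 u_p(e_a,e_a)$;
(iii) for $j+1\le p<q\le n-1$, $\tilde{K}(e_p,e_q) = -\langle\nabla_j\log u_p,\nabla_j\log u_q\rangle$.

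\textbf{Step 2: scalar curvature.} Summing $\tilde{R}_j=\sum_{a\ne b}\tilde{K}(e_a,e_b)$ and tracing the Hessian in case (ii) over $a$ to obtain $\Delta_j u_p$ yields the first identity
\[
\tilde{R}_j = R_j - 2\sum_{p=j+1}^{n-1}u_p^{-1}\Delta_j u_p - 2\sum_{j+1\le p<q\le n-1}\langle\nabla_j\log u_p,\nabla_j\log u_q\rangle.
\]

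\textbf{Step 3: passage to the $\rho_{j+1}^{1/2}$-form.} Since $\log\rho_{j+1}=\sum_{p}\log u_p$, one has
\[
|\nabla_j\log\rho_{j+1}|^2 = \sum_{p}|\nabla_j\log u_p|^2 + 2\sum_{p<q}\langle\nabla_j\log u_p,\nabla_j\log u_q\rangle,
\]
and from $u_p^{-1}\Delta_j u_p = \Delta_j\log u_p + |\nabla_j\log u_p|^2$ it follows that $\sum_p u_p^{-1}\Delta_j u_p = \Delta_j\log\rho_{j+1} + \sum_p|\nabla_j\log u_p|^2$. Substituting these into the first formula gives
\[
\tilde{R}_j = R_j - 2\Delta_j\log\rho_{j+1} - |\nabla_j\log\rho_{j+1}|^2 - \sum_{p=j+1}^{n-1}|\nabla_j\log u_p|^2.
\]
Setting $v:=\rho_{j+1}^{1/2}$, a direct calculation gives $v^{-1}\Delta_j v = \tfrac{1}{2}\Delta_j\log\rho_{j+1} + \tfrac{1}{4}|\nabla_j\log\rho_{j+1}|^2$, so $4\rho_{j+1}^{-1/2}\Delta_j(\rho_{j+1}^{1/2}) = 2\Delta_j\log\rho_{j+1} + |\nabla_j\log\rho_{j+1}|^2$, producing the equivalent form.

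The only real difficulty is bookkeeping in Step 1, namely confirming that cross contributions from distinct warping factors contribute only through the pairwise inner products $\langle\nabla_j\log u_p,\nabla_j\log u_q\rangle$; this is ensured by the block-diagonal structure of $\tilde{g}_j$ combined with the fact that each $u_p$ depends only on $\Sigma_j$, so all $t$-derivatives vanish. Everything else is algebraic manipulation.
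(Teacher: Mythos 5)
Your proof is correct. The paper in fact states Lemma \ref{l4} without giving a proof (it is quoted as a standard fact from the minimal $k$-slicing framework of Schoen--Yau type computations), and your argument supplies exactly the expected computation: identify $(\tilde{\Sigma}_j,\tilde{g}_j)$ as a multiply warped product over $(\Sigma_j,g)$ with one-dimensional fibers and warping functions $u_p|_{\Sigma_j}$, sum the standard sectional curvatures (base--base, base--fiber giving $-u_p^{-1}\Delta_j u_p$ after tracing, fiber--fiber giving $-\langle\nabla_j\log u_p,\nabla_j\log u_q\rangle$), and then convert via $u_p^{-1}\Delta_j u_p=\Delta_j\log u_p+|\nabla_j\log u_p|^2$ and $4\rho_{j+1}^{-1/2}\Delta_j(\rho_{j+1}^{1/2})=2\Delta_j\log\rho_{j+1}+|\nabla_j\log\rho_{j+1}|^2$ to obtain the second form; all of these steps check out.
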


\begin{lemma}\label{15} For $k\leq j\leq n-1$, the scalar curvature $\hat{R}_j$ of $(\hat{\Sigma}_{j},\hat{g}_{j})$ is given by
\begin{eqnarray*}
\hat{R}_j &=& R_j-2\sum_{p=j}^{n-1}u_p^{-1}\Delta_ju_p-2\sum_{j\leq p<q\leq n-1}\langle \nabla_j\log u_p,\nabla_j\log u_q\rangle\\
&=& \hat{R}_{j+1}+|\tilde{B}_j|^2+2\lambda_j\\
&=& R^M+\sum_{p=j}^{n+1}|\tilde{B}_p|^2+2\sum_{p=j}^{n+1}\lambda_p.
\end{eqnarray*}
\end{lemma}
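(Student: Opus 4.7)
My plan is to exploit the warped-product structure of $\hat{g}_j$ to derive all three identities from the Gauss equation together with the eigenvalue equation \ref{eq3}. The key structural observation is that $(\hat{\Sigma}_j,\hat{g}_j)$ is isometric to the warped product $(\tilde{\Sigma}_j \times S^1,\, \tilde{g}_j + u_j^{2}dt_j^{2})$ with one-dimensional fiber and warping function $u_j$ (extended trivially in the torus directions of $\tilde{\Sigma}_j$). The standard scalar-curvature formula for such a warped product yields the master identity
\[
\hat{R}_j \;=\; \tilde{R}_j \,-\, \frac{2\tilde{\Delta}_j u_j}{u_j},
\]
from which all three claimed equalities will follow with modest manipulation.

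For the middle identity, I would apply the Gauss equation to the minimal hypersurface $\tilde{\Sigma}_j \subset (\hat{\Sigma}_{j+1},\hat{g}_{j+1})$ (using $\tilde{H}_j = 0$ established above) to obtain
\[
\hat{R}_{j+1} \;=\; \tilde{R}_j + 2\,Ric_{\hat{g}_{j+1}}(\nu_j,\nu_j) + |\tilde{B}_j|^2,
\]
and then feed in the eigenvalue equation \ref{eq3}, which after rearrangement reads $\tilde{\Delta}_j u_j/u_j = -Q_j - \lambda_j = -Ric_{\hat{g}_{j+1}}(\nu_j,\nu_j) - |\tilde{B}_j|^2 - \lambda_j$. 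Substituting into the master identity and subtracting the Gauss relation eliminates the Ricci term and delivers exactly $\hat{R}_j = \hat{R}_{j+1} + |\tilde{B}_j|^2 + 2\lambda_j$. The third identity is then an immediate iteration of this relation down to $j = n-1$, together with the fact that $\hat{\Sigma}_n = M$ and $\hat{g}_n = g$, so $\hat{R}_n = R^M$ (the upper limits ``$n+1$'' in the stated formula appear to be typos for ``$n-1$'').

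For the first identity, I would start from Lemma \ref{l4}, which already expresses $\tilde{R}_j$ in terms of $R_j$ with sums running from $p = j+1$. Since $u_j$ depends only on the $\Sigma_j$ factor, a direct computation of the Laplacian on the warped product $\tilde{g}_j$ gives
\[
\tilde{\Delta}_j u_j \;=\; \Delta_j u_j + \sum_{p=j+1}^{n-1}\langle \nabla_j \log u_p, \nabla_j u_j\rangle.
\]
Dividing by $u_j$ and substituting into the master identity contributes exactly the missing $p = j$ terms $-2 u_j^{-1}\Delta_j u_j$ and $-2\sum_{p=j+1}^{n-1}\langle \nabla_j\log u_j, \nabla_j\log u_p\rangle$, extending the two sums from $p \geq j+1$ to $p \geq j$ and from $j+1 \leq p < q$ to $j \leq p < q$ respectively. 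I do not anticipate any conceptual obstacle beyond careful bookkeeping of indices and signs, and verifying that the one-dimensional-fiber warped-product formula applies here without correction.
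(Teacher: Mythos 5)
The paper states Lemma \ref{15} without proof, and your argument correctly supplies one by exactly the route the paper's framework presupposes: the one-dimensional-fiber warped-product identity $\hat{R}_j=\tilde{R}_j-2u_j^{-1}\tilde{\Delta}_j u_j$, the twice-traced Gauss equation for the minimal hypersurface $\tilde{\Sigma}_j\subset(\hat{\Sigma}_{j+1},\hat{g}_{j+1})$ (the same identity the paper itself invokes for $Q_j$ in the stability proposition), and the eigenvalue equation (\ref{eq3}), with the third line obtained by iterating the second up to $\hat{\Sigma}_n=M$, $\hat{R}_n=R^M$, and the first line obtained from Lemma \ref{l4} plus the drift term $\tilde{\Delta}_j u_j=\Delta_j u_j+\sum_{p=j+1}^{n-1}\langle\nabla_j\log u_p,\nabla_j u_j\rangle$. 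Your bookkeeping checks out, and you are also right that the upper limits $n+1$ in the stated formula reflect the reindexing of Section 3 (where the slicing ends at $\Sigma_{n+1}\subset M$) and should read $n-1$ in the notation of Section 2.
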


\begin{proposition}\label{p1} If $R^M>0$ and $H^{\partial M}\geq 0$ then
\[
4\int_{\Sigma_j}|\nabla_j\varphi|^2dv_j> -2\int_{\partial\Sigma_j}\varphi^2 H^{\partial\Sigma_j}d\sigma_j
-\int_{\Sigma_j}\varphi^2R_jdv_j,
\]
for every $\varphi\in C^{\infty}(\Sigma_j)$ and $j=k,\cdots,n-1$.
\end{proposition}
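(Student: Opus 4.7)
The plan is to deduce the inequality from the free-boundary stability of $\tilde\Sigma_j\subset(\hat\Sigma_{j+1},\hat g_{j+1})$ established in the preceding proposition. I test the stability against $\Psi(x,t)=\rho_{j+1}(x)^{-1/2}\varphi(x)$, use Lemmas~\ref{l4} and \ref{15} to replace $\tilde R_j$ and $\hat R_{j+1}$ by expressions involving $R_j$ and $R^M$, use Lemma~\ref{16} together with Remark~\ref{obs1}(2) to reassemble the boundary terms into $H^{\partial M}-H^{\partial\Sigma_j}$, and finally apply a completion-of-the-square identity that produces the factor $4$ on the gradient term.

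\textbf{Step 1 (strict stability).} The Gauss equation for the zero-mean-curvature hypersurface $\tilde\Sigma_j\hookrightarrow\hat\Sigma_{j+1}$ gives $2Q_j=\hat R_{j+1}-\tilde R_j+|\tilde B_j|^2$. Lemma~\ref{15} yields
\[
\hat R_{j+1}=R^M+\sum_{p\geq j+1}|\tilde B_p|^2+2\sum_{p\geq j+1}\lambda_p\geq R^M>0,
\]
the $\lambda_p\geq 0$ coming from the stability of each $\tilde\Sigma_p$. Multiplying $\tilde S_j(\Psi)\geq 0$ by $2$ and discarding the strictly positive term $\int(\hat R_{j+1}+|\tilde B_j|^2)\Psi^2$ produces
\[
2\int_{\tilde\Sigma_j}|\nabla_{\tilde g_j}\Psi|^2\,dv_{\tilde g_j}+\int_{\tilde\Sigma_j}\tilde R_j\Psi^2\,dv_{\tilde g_j}>2\int_{\partial\tilde\Sigma_j}\Psi^2 B^{\partial\Sigma_{j+1}}(\nu_j,\nu_j)\,d\sigma_{\tilde g_j}.
\]

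\textbf{Step 2 (reduction to $\Sigma_j$).} Insert $\Psi=\rho_{j+1}^{-1/2}\varphi$: the $t$-independence makes the torus integration contribute only a common factor that cancels. Expanding $|\nabla_{\tilde g_j}\Psi|^2\rho_{j+1}=|\nabla_j\varphi|^2-\varphi\langle\nabla_j\varphi,\nabla_j\log\rho_{j+1}\rangle+\tfrac14\varphi^2|\nabla_j\log\rho_{j+1}|^2$, substituting Lemma~\ref{l4}, integrating by parts on the $\int\varphi^2\Delta_j\log\rho_{j+1}$ contribution, and using $\rho^{-1/2}\Delta\rho^{1/2}=\tfrac12\Delta\log\rho+\tfrac14|\nabla\log\rho|^2$, Step~1 becomes
\begin{align*}
& 2\!\int\!|\nabla_j\varphi|^2+2\!\int\!\varphi\langle\nabla_j\varphi,\nabla_j\log\rho_{j+1}\rangle-\tfrac12\!\int\!\varphi^2|\nabla_j\log\rho_{j+1}|^2+\!\int\!R_j\varphi^2-\!\int\!\varphi^2\!\!\sum_{p\geq j+1}\!\!|\nabla_j\log u_p|^2 \\
&\qquad>2\int_{\partial\Sigma_j}\varphi^2\bigl(B^{\partial\Sigma_{j+1}}(\nu_j,\nu_j)+\langle\nabla_j\log\rho_{j+1},\eta_j\rangle\bigr).
\end{align*}
By Lemma~\ref{16}, $B^{\partial\Sigma_{p+1}}(\nu_p,\nu_p)=\langle\nabla_j\log u_p,\eta_j\rangle$ on $\partial\Sigma_j$, so the right-hand boundary expression equals $\sum_{p\geq j}B^{\partial\Sigma_{p+1}}(\nu_p,\nu_p)=H^{\partial M}-H^{\partial\Sigma_j}$ by Remark~\ref{obs1}(2).

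\textbf{Step 3 (completion of the square).} The pointwise identity
\[
2\varphi\langle\nabla_j\varphi,\nabla_j\log\rho_{j+1}\rangle-\tfrac12\varphi^2|\nabla_j\log\rho_{j+1}|^2=2|\nabla_j\varphi|^2-\tfrac12|2\nabla_j\varphi-\varphi\nabla_j\log\rho_{j+1}|^2
\]
turns the first two gradient-like terms on the left-hand side of Step~2 into $4|\nabla_j\varphi|^2$ minus a non-negative square. The resulting inequality reads
\begin{align*}
& 4\int_{\Sigma_j}|\nabla_j\varphi|^2+\int_{\Sigma_j}R_j\varphi^2 \\
&\qquad> 2\int_{\partial\Sigma_j}\varphi^2(H^{\partial M}-H^{\partial\Sigma_j})+\tfrac12\int_{\Sigma_j}|2\nabla_j\varphi-\varphi\nabla_j\log\rho_{j+1}|^2+\int_{\Sigma_j}\varphi^2\!\!\sum_{p\geq j+1}\!\!|\nabla_j\log u_p|^2.
\end{align*}
The last two right-hand-side integrals are non-negative and $2\int_\partial\varphi^2 H^{\partial M}\geq 0$ by mean convexity; discarding all three yields the claim $4\int|\nabla_j\varphi|^2>-2\int_\partial\varphi^2 H^{\partial\Sigma_j}-\int\varphi^2 R_j$. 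The main obstacle is identifying the completion-of-the-square identity that is precisely responsible for the factor $4$; once this is in hand, the rest is careful sign bookkeeping to ensure every residual term ($|\tilde B_p|^2$, $|\nabla_j\log u_p|^2$, $\hat R_{j+1}-R^M$, $2\sum\lambda_p$, the non-negative square above, and the $H^{\partial M}$ contribution) has the sign required to be absorbed or discarded without destroying strict inequality.
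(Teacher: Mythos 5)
Your proposal is correct and follows essentially the same route as the paper: both test the stability form with $\rho_{j+1}^{-1/2}\varphi$ (you via $\tilde S_j$ on the $t$-independent extension, the paper via the equivalent weighted form $S_j$), invoke the Gauss equation together with Lemmas~\ref{l4} and \ref{15} to bound $2Q_j$, and use Lemma~\ref{16} with Remark~\ref{obs1} to convert the boundary terms into $H^{\partial M}-H^{\partial\Sigma_j}$ before discarding nonnegative remainders. The only difference is cosmetic bookkeeping --- your completion-of-the-square identity replaces the paper's direct expansion of $\rho_{j+1}\vert\nabla_j(\varphi\rho_{j+1}^{-1/2})\vert^2$ and integration by parts, and both yield the same strict inequality (implicitly for $\varphi\not\equiv 0$, exactly as in the paper).
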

\begin{proof}
Since $\Sigma_{j}$ minimizes the weighted volume functional $V_{\rho_{j+1}}$, we have that $S_j(\varphi)\geq 0,$ for every $\varphi\in C^{\infty}(\Sigma_j)$. It follows that,
\[ 
4\int_{\Sigma_j}|\nabla_j\varphi|^2\rho_{j+1}dv_j\geq
2\int_{\Sigma_j}c_j\varphi^2\rho_{j+1}dv_j+2\int_{\partial \Sigma_{j}}\varphi^2B^{\partial\Sigma_{j+1}}(\nu_j,\nu_j)\rho_{j+1}d\sigma_j,
\]
for every $\varphi\in C^{\infty}(\Sigma_j).$ From Gauss Equation we have that
\[Q_j=\frac{1}{2}(\hat{R}_{j+1}-\tilde{R}_j+|\tilde{B}_j|^2).\]

Since $R^M>0$, from lemma \ref{15}, we have that $\hat{R}_{i}>0$, for every $k\leq i\leq n-1$. It follows from the lemma \ref{l4} that

\[2Q_j>-R_j+4\rho_{j+1}^{-\frac{1}{2}}\Delta_j(\rho_{j+1}^{\frac{1}{2}})\]

Thus,
\begin{eqnarray*}
4\int_{\Sigma_j}|\nabla_j\varphi|^2\rho_{j+1}dv_j & > &-\int_{\Sigma_j}R_j\varphi^2\rho_{j+1}dv_j+ 4\int_{\Sigma_j}\rho_{j+1}^{\frac{1}{2}}\Delta_j(\rho_{j+1}^{\frac{1}{2}})\varphi^2dv_j\\
&  &+ 2\int_{\partial \Sigma_{j}}\varphi^2B^{\partial\Sigma_{j+1}}(\nu_j,\nu_j)\rho_{j+1}d\sigma_j,
\end{eqnarray*}
for every $\varphi\in C^{\infty}(\Sigma_j).$ Replacing $\varphi$ by $\varphi\rho_{j+1}^{-\frac{1}{2}}$ at the last inequality, we obtain that 
\begin{eqnarray*}
4\int_{\Sigma_j}|\nabla_j(\varphi\rho_{j+1}^{-\frac{1}{2}})|^2\rho_{j+1}dv_j &  > &-
\int_{\Sigma_j}R_j\varphi^2dv_j + 4\int_{\Sigma_j}\rho_{j+1}^{-\frac{1}{2}}\Delta_j(\rho_{j+1}^{\frac{1}{2}})\varphi^2dv_j \nonumber\\
& & +2\int_{\partial \Sigma_{j}}\varphi^2B^{\partial\Sigma_{j+1}}(\nu_j,\nu_j)d\sigma_j.
\end{eqnarray*}

Observe that
\[
\nabla_j(\varphi\rho_{j+1}^{-\frac{1}{2}})=\varphi\nabla_j\rho_{j+1}^{-\frac{1}{2}}+\rho_{j+1}^{-\frac{1}{2}}\nabla_j\varphi
\]

This implies que,
\[|\nabla_j(\varphi\rho_{j+1}^{-\frac{1}{2}})|^2=\rho_{j+1}^{-1}|\nabla_j\varphi|^2+\varphi^2|\nabla_j\rho_{j+1}^{-\frac{1}{2}}|^2+2\varphi\rho_{j+1}^{-\frac{1}{2}}\langle \nabla_j\rho_{j+1}^{-\frac{1}{2}},\nabla_j \varphi\rangle
\]

Thus,
\[
\rho_{j+1}|\nabla_j(\varphi\rho_{j+1}^{-\frac{1}{2}})|^2=|\nabla_j\varphi|^2+\varphi^2\rho_{j+1}|\nabla_j\rho_{j+1}^{-\frac{1}{2}}|^2+\langle \nabla_j\log\rho_{j+1}^{-\frac{1}{2}},\nabla_j (\varphi^2)\rangle
\]

Using integration by parts, we have that
\begin{eqnarray*}
\int_{\Sigma_j} \langle \nabla_j\log\rho_{j+1}^{-\frac{1}{2}},\nabla_j (\varphi^2)\rangle dv_j & = &-\int_{\Sigma_j}\varphi^2\Delta_j\log \rho_{j+1}^{-\frac{1}{2}}dv_j\\
&+& \int_{\partial \Sigma_j}\varphi^2\frac{\partial(\log\rho_{j+1}^{-\frac{1}{2}})}{\partial \eta_j}d\sigma_j\\
& = & +\int_{\Sigma_j}\varphi^2\rho_{j+1}^{-\frac{1}{2}}\Delta_j \rho_{j+1}^{\frac{1}{2}}dv_j\\
&-&\int_{\Sigma_j}\varphi^2|\nabla_j\log \rho_{j+1}^{\frac{1}{2}}|^2)dv_j\\
&-&\frac{1}{2}\int_{\partial \Sigma_j}\varphi^2\langle \nabla_j \log \rho_{j+1},\eta_j\rangle d\sigma_j\\
& = & -\int_{\Sigma_j}\varphi^2|\nabla_j\log \rho_{j+1}^{\frac{1}{2}}|^2dv_j\\
&+& \int_{\Sigma_j}\varphi^2\rho_{j+1}^{-\frac{1}{2}}\Delta_j \rho_{j+1}^{\frac{1}{2}}dv_j\\
&-&\frac{1}{2}\int_{\partial \Sigma_j}\varphi^2\langle \nabla_j \log \rho_{j+1},\eta_j\rangle d\sigma_j\\
\end{eqnarray*}

Then,
\begin{eqnarray*}
4\int_{\Sigma_j} \rho_{j+1}|\nabla_j(\varphi\rho_{j+1}^{-\frac{1}{2}})|^2dv_j & = & 4\int_{\Sigma_j}|\nabla_j\varphi|^2dv_j\\
&+& 4\int_{\Sigma_j}\varphi^2\rho_{j+1}|\nabla_j\rho_{j+1}^{-\frac{1}{2}}|^2dv_j\\
&-& 4\int_{\Sigma_j}\varphi^2|\nabla_j\log \rho_{j+1}^{\frac{1}{2}}|^2dv_j\\
& + &4\int_{\Sigma_j}\varphi^2\rho_{j+1}^{-\frac{1}{2}}\Delta_j \rho_{j+1}^{\frac{1}{2}}dv_j\\
&-& 2\int_{\partial \Sigma_j}\varphi^2\langle \nabla_j \log \rho_{j+1},\eta_j\rangle d\sigma_j\\
\end{eqnarray*}

Since,
\[ 
\nabla_j \rho_{j+1}^{-\frac{1}{2}}=-\rho_{j+1}^{-1}\nabla_j\rho_{j+1}^{\frac{1}{2}},
\]

\noindent we obtain that
\[
\rho_{j+1}|\nabla_j\rho_{j+1}^{-\frac{1}{2}}|^2=|\nabla_j\log \rho_{j+1}^{\frac{1}{2}}|^2.
\]

This implies that
\begin{eqnarray*}
4\int_{\Sigma_j} \rho_{j+1}|\nabla_j(\varphi\rho_{j+1}^{-\frac{1}{2}})|^2dv_j &=& 4\int_{\Sigma_j}|\nabla_j\varphi|^2dv_j\\
&+& 4\int_{\Sigma_j}\varphi^2\rho_{j+1}^{-\frac{1}{2}}\Delta_j \rho_{j+1}^{\frac{1}{2}}dv_j\\
&-& 2\int_{\partial \Sigma_j}\varphi^2\langle \nabla_j \log \rho_{j+1},\eta_j\rangle d\sigma_j\\
\end{eqnarray*}

Consequently, 
\begin{eqnarray*}
4\int_{\Sigma_j}|\nabla_j\varphi|^2dv_j & > &  2\int_{\partial \Sigma_{j}}\varphi^2\left(B^{\partial\Sigma_{j+1}}(\nu_j,\nu_j)+\langle \nabla_j \log\rho_{j+1},\eta_j\rangle\right)d\sigma_j\\
& - & \int_{\Sigma_j}R_j\varphi^2dv_j
\end{eqnarray*}

Since $H^{\partial M}_g\geq 0$, from the remark \ref{obs1} and lemma \ref{16} that 
\begin{eqnarray*}
4\int_{\Sigma_j}|\nabla_j\varphi|^2dv_j & > &  2\int_{\partial \Sigma_{j}}\varphi^2\left(\sum_{p=j}^{n-1}B^{\partial\Sigma_{p+1}}(\nu_p,\nu_p)\right)d\sigma_j -\int_{\Sigma_j}R_j\varphi^2dv_j\\
& = &  2\int_{\partial \Sigma_{j}}\varphi^2\left( H^{\partial M}_g-H^ {\partial \Sigma_j}\right)d\sigma_j - \int_{\Sigma_j}R_j\varphi^2dv_j\\
& \geq &  -2\int_{\partial \Sigma_{j}}\varphi^2H^ {\partial \Sigma_j} d\sigma_j-\int_{\Sigma_j}R_j\varphi^2dv_j
\end{eqnarray*}

Therefore,
\[
4\int_{\Sigma_j}|\nabla_j\varphi|^2dv_j> -2\int_{\partial\Sigma_j}\varphi^2 H^{\partial\Sigma_j}d\sigma_j-\int_{\Sigma_j}\varphi^2R_jdv_j,
\]
\noindent for every $\varphi\in C^{\infty}(\Sigma_j)$.
\end{proof}

\begin{theorem}\label{its}
Let $(M,\partial M,g)$ be a Riemannian $n$-manifold  such that $R^M>0$ and $H^{\partial M}\geq 0$. Consider the free boundary minimal $k$-slicing in $(M,g)$
\[\Sigma_k\subset \cdots \subset \Sigma_{n-1}\subset \Sigma_n=M.\]

Then:
\begin{enumerate}
\item[(1)] The manifold $\Sigma_j$ has a metric with positive scalar curvature and minimal boundary, for every $3\leq k\leq j\leq n-1$.
\item[(2)] If $k=2$, then the connected components of $\Sigma_2$ are disks.
\end{enumerate}
\end{theorem}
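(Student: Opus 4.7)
The plan is to use Proposition \ref{p1} as the main analytic input in both parts. For (1), I will produce the required metric on $\Sigma_j$ by a conformal change of $g$, invoking positivity of the associated conformal Laplacian's Rayleigh quotient. For (2), I will plug the characteristic function of a connected component directly into Proposition \ref{p1} and finish via Gauss-Bonnet.

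For part (1), fix $j\in\{k,\ldots,n-1\}$ with $j\geq 3$ and look for $u>0$ such that $\tilde g_j := u^{4/(j-2)}g$ has positive scalar curvature and minimal boundary. The standard conformal transformation laws read
\[\tilde R_j = u^{-\frac{j+2}{j-2}}\left(-\tfrac{4(j-1)}{j-2}\Delta_j u + R_j u\right), \qquad \tilde H^{\partial \Sigma_j} = u^{-\frac{j}{j-2}}\left(\tfrac{2}{j-2}\tfrac{\partial u}{\partial \eta_j} + H^{\partial \Sigma_j}u\right),\]
so it suffices to find positive $u$ solving
\[-\tfrac{4(j-1)}{j-2}\Delta_j u + R_j u = \lambda u \text{ on } \Sigma_j, \qquad \tfrac{2}{j-2}\tfrac{\partial u}{\partial \eta_j} + H^{\partial \Sigma_j}u = 0 \text{ on } \partial \Sigma_j,\]
for some $\lambda>0$. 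By standard elliptic theory, such a positive first eigenfunction exists whenever the associated quadratic form
\[Q(\varphi) := \int_{\Sigma_j}\left(\tfrac{4(j-1)}{j-2}|\nabla_j\varphi|^2 + R_j \varphi^2\right)dv_j + 2\int_{\partial \Sigma_j}H^{\partial \Sigma_j}\varphi^2\, d\sigma_j\]
is strictly positive on nonzero smooth $\varphi$. Since $\frac{4(j-1)}{j-2}\geq 4$ for every $j\geq 3$, we may estimate $Q(\varphi) \geq 4\int_{\Sigma_j}|\nabla_j\varphi|^2\, dv_j + \int_{\Sigma_j}R_j\varphi^2\, dv_j + 2\int_{\partial \Sigma_j}H^{\partial \Sigma_j}\varphi^2\, d\sigma_j$, which is strictly positive by Proposition \ref{p1}. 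Hence $\lambda_1 > 0$, and $\tilde g_j = u_1^{4/(j-2)}g$ yields the desired metric on $\Sigma_j$.

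For part (2), with $k=2$ each connected component $C \subset \Sigma_2$ is both open and closed, so its indicator $\mathbf{1}_C$ is smooth and may be inserted into Proposition \ref{p1}; this yields $\int_C R_2\, dv_2 + 2\int_{\partial C}H^{\partial C}d\sigma_2 > 0$. Since $\dim C = 2$ we have $R_2 = 2K$ and $H^{\partial C}$ is the geodesic curvature of $\partial C$, so Gauss-Bonnet gives $4\pi \chi(C) > 0$. Combined with $\partial C \neq \emptyset$ (inherited from the free-boundary structure of $\Sigma_2$), this forces $\chi(C) = 1$, so $C$ is a disk. The main obstacle is making the conformal change in part (1) compatible with the coefficient $4$ appearing in Proposition \ref{p1}; this is possible precisely because the dimensional constant $\tfrac{4(j-1)}{j-2}$ dominates $4$ when $j\geq 3$, which is exactly where the hypothesis $k\geq 3$ in part (1) enters.
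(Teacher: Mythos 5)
Your argument is essentially the paper's: part (1) is Proposition \ref{p1} plus the observation that $\frac{4(j-1)}{j-2}\geq 4$ for $j\geq 3$ (the paper says the same thing by multiplying through by $k_j=\frac{j-2}{4(j-1)}$), followed by the standard conformal eigenvalue argument that the paper leaves implicit; part (2) is the paper's Gauss--Bonnet argument, and your component-wise use of Proposition \ref{p1} via indicator functions is actually cleaner than the paper's single global inequality (\ref{e5}), which by itself only controls the \emph{sum} of the Euler characteristics of the components.

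Two points need repair, though neither is conceptual. First, your conformal constants are mutually inconsistent. With the paper's convention that $H$ is the trace of the second fundamental form (the convention under which Remark \ref{obs1} and Proposition \ref{p1} are written), the boundary transformation law is $\tilde H^{\partial\Sigma_j}=u^{-\frac{j}{j-2}}\left(\frac{2(j-1)}{j-2}\frac{\partial u}{\partial\eta_j}+H^{\partial\Sigma_j}u\right)$, not $\frac{2}{j-2}\frac{\partial u}{\partial\eta_j}+H^{\partial\Sigma_j}u$; with this corrected boundary operator, the natural Rayleigh quotient of the pair (conformal Laplacian, Robin condition) is exactly your $Q$ with boundary coefficient $2$, and the argument closes as you intend. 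As written, however, the Robin problem you pose has Rayleigh quotient with boundary coefficient $2(j-1)$, so positivity of your $Q$ does not give its first eigenvalue positivity; and conversely, the minimizer of your $Q$ satisfies $\frac{\partial u}{\partial\eta_j}=-\frac{j-2}{2(j-1)}H^{\partial\Sigma_j}u$, which inserted into your stated transformation law yields $\tilde H^{\partial\Sigma_j}=\frac{j-2}{j-1}u^{-\frac{2}{j-2}}H^{\partial\Sigma_j}\neq 0$ in general. Second, in part (2) the claim $\partial C\neq\emptyset$ does not follow from ``the free-boundary structure'' alone: a component of $\Sigma_2$ could a priori be closed, in which case your inequality only gives $\chi(C)>0$, i.e.\ a sphere. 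The paper makes the same silent assumption; in the actual application (Proposition \ref{pp11}, Lemma \ref{lemmaa}) it is legitimate because $\Sigma_2$ is connected and carries a nonzero-degree map to $(\mathbb{D}^2,\partial\mathbb{D}^2)$, which forces $\partial\Sigma_2\neq\emptyset$, so this should either be added as a hypothesis or justified from the construction rather than asserted.
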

\begin{proof}
(1) Consider $j\in\{k,\cdots,n-1\}$, here $k\geq 3$. It follows from Proposition \ref{p1} that
\[-4k_j\int_{\Sigma_j}|\nabla_j\varphi|^2dv_j < 2k_j\int_{\partial\Sigma_j}\varphi^2 H^{\partial\Sigma_j}d\sigma_j+k_j\int_{\Sigma_j}\varphi^2 R_jdv_j,\]
\noindent for every $\varphi\in C^{\infty}(\Sigma_j)$ such that $\varphi\not\equiv 0$ and $k_j=\frac{j-2}{4(j-1)}>0.$ This implies that 
\[\displaystyle\int_{\Sigma_j}|\nabla_j\varphi|^2dv_j +2k_j\displaystyle\int_{\partial\Sigma_j}\varphi^2 H^{\partial\Sigma_j}d\sigma_j+k_j\displaystyle\int_{\Sigma_j}\varphi^2 R_jdv_j> (1-4k_j)\displaystyle\int_{\Sigma_j}|\nabla_j\varphi|^2dv_j,\]\noindent for every $\varphi\in H^1(\Sigma_j)$ such that $\varphi\not\equiv 0$. It follows that
\[\lambda_j=\inf_{0\not\equiv \varphi\in H^1(\Sigma_j)}\displaystyle\frac{\displaystyle\int_{\Sigma_j}|\nabla_j\varphi|^2dv_j +2k_j\displaystyle\int_{\partial\Sigma_j}\varphi^2 H^{\partial\Sigma_j}d\sigma_j+k_j\displaystyle\int_{\Sigma_j}\varphi^2 R_jdv_j}{\displaystyle\int_{\Sigma_j}\varphi^2dv_j}>0.\]

Therefore, there exists a metric in $\Sigma_j$ with positive scalar curvature and minimal boundary.

(2) From proposition \ref{p1} we have that
\[4\int_{\Sigma_2}|\nabla_2\varphi|^2dv_2> -2\int_{\partial\Sigma_2}\varphi^2 H^{\partial\Sigma_2}d\sigma_2-2\int_{\Sigma_2}\varphi^2 K dv_2,\] \noindent  for every $\varphi\in C^{\infty}(\Sigma_2)$ such that $\varphi\not\equiv 0$, because $R_2=2K_2$, where $K_2$ is the Gaussian curvature of $(\Sigma_2,g)$. In particular, for $\varphi\equiv 1$ we have that
\begin{equation}\label{e5}\int_{\partial\Sigma_2} H^{\partial\Sigma_2}d\sigma_2+\int_{\Sigma_2}K dv_2>0.\end{equation}

Let $S$ be a connected component of $\Sigma_2$. From inequality (\ref{e5}) and from Gauss-Bonnet theorem, we have that $\chi(S)>0.$ Therefore $S$ is a disk. 

\end{proof}

\section{Proof of inequality}
\begin{proposition}\label{pp11} There is a free boundary minimal $2$-slicing 

\[\Sigma_2\subset \Sigma_3\subset \cdots \subset \Sigma_{n+1}\subset (M,g),\]
such that $\Sigma_k$ is connected and the map $F_k:=\left.F\right|_{\Sigma_k}:(\Sigma_k,\partial \Sigma_k)\rightarrow (\mathbb{D}^2\times T^{k-2},\partial\mathbb{D}^2\times T^{k-2})$ has non-zero degree, for every $k=2,\cdots,n+1.$
\end{proposition}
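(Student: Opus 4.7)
The plan is to construct the slicing inductively from the top down, $\Sigma_{n+1}\supset\Sigma_n\supset\cdots\supset\Sigma_2$, at each stage selecting a relative homology class that encodes one more circle factor of the target $\mathbb{D}^2\times T^n$, minimizing the appropriate weighted volume inside that class, and extracting a connected component on which the restricted map retains nonzero degree.

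For the base step I split $T^n=T^{n-1}\times S^1$, let $\pi:\mathbb{D}^2\times T^n\to S^1$ be the projection onto the last $S^1$ factor, and let $p:\mathbb{D}^2\times T^n\to\mathbb{D}^2\times T^{n-1}$ be the complementary projection. For a regular value $t_0\in S^1$ of $\pi\circ F$, the transverse preimage $Q_{n+1}:=(\pi\circ F)^{-1}(t_0)$ is a properly embedded two-sided hypersurface with $\partial Q_{n+1}\subset\partial M$ (since $F(\partial M)\subset\partial\mathbb{D}^2\times T^n$), and the composition $p\circ F|_{Q_{n+1}}:(Q_{n+1},\partial Q_{n+1})\to(\mathbb{D}^2\times T^{n-1},\partial\mathbb{D}^2\times T^{n-1})$ has degree equal to $\deg F\neq 0$ by a direct preimage count. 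I then minimize the $(n+1)$-volume within the class $[Q_{n+1}]\in H_{n+1}(M,\partial M;\mathbb{Z})$ among integer currents with support of boundary in $\partial M$. Since $\dim M=n+2\leq 7$, compactness in the flat topology combined with Grüter-Jost interior and boundary regularity produce a smooth properly embedded free-boundary minimizer $T_{n+1}$; it is in the same relative class as $Q_{n+1}$, so $\deg(p\circ F|_{T_{n+1}})=\deg F$. Additivity of the degree over connected components gives some component $\Sigma_{n+1}$ with nonzero degree; as a component of a minimizer it is itself area-minimizing in its own class, hence free-boundary stable minimal. I set $u_{n+1}>0$ to be a first Jacobi eigenfunction and $\rho_{n+1}=u_{n+1}$.

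The inductive step is identical in structure. Suppose I have built connected $\Sigma_{j+1}\subset\Sigma_{j+2}$, a free-boundary stable critical hypersurface for $V_{\rho_{j+2}}$, together with $u_{j+1}>0$, $\rho_{j+1}=\rho_{j+2}u_{j+1}$, and $F_{j+1}:(\Sigma_{j+1},\partial\Sigma_{j+1})\to(\mathbb{D}^2\times T^{j-1},\partial\mathbb{D}^2\times T^{j-1})$ of nonzero degree. I repeat the base-step construction verbatim inside $\Sigma_{j+1}$: split off the last circle factor of $T^{j-1}$, take the transverse preimage of a regular value of its composition with $F_{j+1}$ to obtain a properly embedded free-boundary hypersurface $Q_j\subset\Sigma_{j+1}$ whose push-forward to $\mathbb{D}^2\times T^{j-2}$ still has degree $\deg F$, then minimize $V_{\rho_{j+1}}$ within the class $[Q_j]\in H_j(\Sigma_{j+1},\partial\Sigma_{j+1};\mathbb{Z})$. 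Because $\rho_{j+1}$ is smooth and bounded away from zero on the compact ambient, weighted GMT compactness and the weighted analogue of Grüter-Jost give, for $j\leq n\leq 5$, a smooth free-boundary weighted minimizer, from which a connected component $\Sigma_j$ with nonzero degree is extracted. Taking $u_j$ to be the first eigenfunction of $S_j$ and $\rho_j=\rho_{j+1}u_j$ closes the induction; iterating from $j=n$ down to $j=2$ yields the desired $2$-slicing.

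The principal delicate point is the existence and boundary regularity of free-boundary weighted-area minimizers in a prescribed relative homology class in the low-dimensional regime. In the unweighted case this is classical Grüter-Jost theory; because $\rho_{j+1}$ is smooth, positive, and bounded on the compact ambient, the weighted problem is a smooth perturbation of the unweighted one and the same compactness and $\epsilon$-regularity arguments at the free boundary apply. Once this is granted, the nonzero-degree condition propagates down the slicing automatically, since the minimization stays inside a fixed relative homology class and degree is a homological invariant.
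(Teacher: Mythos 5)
Your proposal is correct and follows essentially the same route as the paper: take a regular-value preimage of the composition with the projection to the last circle factor, minimize the (weighted) volume in the resulting relative homology class via free-boundary GMT existence and regularity, pass to a connected component on which the restricted map keeps nonzero degree, and feed the first eigenfunction into the weight to continue the induction. The extra details you supply (degree additivity over components, the dimension bound, the weighted problem being a smooth perturbation of the unweighted one) only make explicit what the paper leaves implicit.
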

\begin{proof}

Without loss of generality,we assume that $F$ is a smooth function.  Consider the projection $p_{j}:\mathbb{D}^2\times T^{j}\rightarrow S^1$ given by 
\[p_{j}(x,(t_1,\cdots,t_j))=t_{j},\]
\noindent for every $x\in \Sigma$ and $(t_1,\cdots,t_{j})\in T^{j}=\mathbb{S}^1\times\cdots \times \mathbb{S}^1$.

We will start constructing the manifold $\Sigma_{n+1}$. For this, define $f_{n}=p_{n}\circ F.$ It follows from the Sard's Theorem that there is $\theta_n\in S^1$ which is a regular value of $f_{n}$ and $\partial f_{n}$. Define
\[S_{n+1}:=f_{n}^{-1}(\theta_n)=F^{-1}(\mathbb{D}^2\times T^{n-1}\times\{\theta_n\}).\]

Note that $S_{n+1}\subset M$ is a properly embedded hypersurface which represents a non-trivial class in $H_{n+1}(M,\partial M)$ and 
\[\left.F\right|_{S_{n+1}}:(S_{n+1},\partial S_{n+1})\rightarrow (\mathbb{D}^2 \times T^{n-1}, \partial \mathbb{D}^2\times T^{n-1})\]
\noindent is a non-zero degree map. It follows from geometric measure theory that there is a properly embedded free-boundary smooth hypersuface $\Sigma_{n+1}'\subset M$ which minimizes volume in $(M,g)$ and represents the class $[S_{n+1}]\in H_{n+1}(M,\partial M)$. Since $\Sigma_{n+1}'$ and $S_{n+1}$ represent the same homology class in $H_{n+1}(M,\partial M)$, we have that 
\[\left.F\right|_{\Sigma_{n+1}'}:(\Sigma_{n+1}',\partial \Sigma_{n+1}')\rightarrow (\mathbb{D}^2 \times T^{n-1}, \partial \mathbb{D}^2\times T^{n-1})\]
\noindent has non-zero degree. Consider $\Sigma_{n+1}$ a connected component of $\Sigma_{n+1}'$ such that $F_{n+1}:=\left.F\right|_{\Sigma_{n+1}}:(\Sigma_{n+1},\partial \Sigma_{n+1})\rightarrow (\mathbb{D}^2 \times T^{n-1}, \partial \mathbb{D}^2\times T^{n-1})$ has non-zero degree. It follows from Lemma $33.4$ in \cite{simon} that $\Sigma_{n+1}$ is still a properly embedded free-boundary hypersurface which minimizes volume in $(M,g)$. Consider $u_{n+1}\in C^{\infty}(\Sigma_{n+1})$ a positive first eigenfunction for the second variation $S_{n+1}$ of the volume of $\Sigma_{n+1}$ in $(M,g)$. Define $\rho_{n+1}=u_{n+1}$. 

By a similar reasoning used to construct $\Sigma_{n+1}$, we obtain a properly embedded free boundary connected smooth hypersurface $\Sigma_{n}\subset \Sigma_{n+1}$ which minimizes the weighted  volume functional $V_{\rho_{n+1}}$ and  
\[F_{n}:=\left.F\right|_{\Sigma_{n}}:(\Sigma_{n},\partial \Sigma_{n})\rightarrow (\mathbb{D}^2 \times T^{n-2}, \partial \mathbb{D}^2\times T^{n-2})\]
\noindent has non-zero degree. Consider $u_{n}\in C^{\infty}(\Sigma_{n+1})$ a positive first eigenfunction for the second variation $S_{n}$ of $V_{\rho_{n+1}}$ on $\Sigma_{n}$. We then define $\rho_{n}=u_{n}\rho_{n+1}$ and we continue this process. 

\end{proof}

\begin{lemma}\label{lemmaa} We have that $\Sigma_2\in \mathcal{F}_M$.
\end{lemma}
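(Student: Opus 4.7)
The plan is to verify the two requirements of membership in $\mathcal{F}_M$: that $\Sigma_2$ is an immersed disk in $M$ (properly embedded with boundary in $\partial M$), and that $\partial \Sigma_2$ is homotopically non-trivial in $\partial M$.

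First, I will observe that $\Sigma_2$ is a disk. By Proposition \ref{pp11}, $\Sigma_2$ is connected, and by Theorem \ref{its}(2), applied to the 2-slicing constructed in Proposition \ref{pp11}, every connected component of $\Sigma_2$ is a disk; hence $\Sigma_2$ is a single disk. Moreover, $\Sigma_2$ is a properly embedded free-boundary hypersurface inside $\Sigma_3$, and inductively $\Sigma_3, \Sigma_4, \ldots, \Sigma_{n+1}$ are free-boundary hypersurfaces in the successive ambient manifolds, so $\partial \Sigma_2 \subset \partial \Sigma_3 \subset \cdots \subset \partial \Sigma_{n+1} \subset \partial M$ and $\Sigma_2$ is in particular an immersed (even embedded) disk in $M$ with boundary in $\partial M$.

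Next, I must show that the circle $\partial \Sigma_2 \subset \partial M$ is not null-homotopic in $\partial M$. The key input is from Proposition \ref{pp11}: the restriction
\[
F_2 = F|_{\Sigma_2} \colon (\Sigma_2, \partial\Sigma_2) \longrightarrow (\mathbb{D}^2, \partial\mathbb{D}^2)
\]
has non-zero degree. Since $\Sigma_2$ is a disk and $\mathbb{D}^2$ is a disk, the degree of a map of pairs $(\mathbb{D}^2, S^1) \to (\mathbb{D}^2, S^1)$ coincides with the degree of the boundary restriction $\partial F_2 \colon \partial\Sigma_2 \to \partial \mathbb{D}^2 \cong S^1$ (both compute $[F_2] \in H_2(\mathbb{D}^2,\partial\mathbb{D}^2) \cong \mathbb{Z}$ via the boundary map $\partial \colon H_2(\mathbb{D}^2,\partial \mathbb{D}^2) \xrightarrow{\sim} H_1(S^1)$). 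In particular, $\partial F_2 \colon \partial\Sigma_2 \to S^1$ has non-zero winding number, and hence represents a non-trivial element of $\pi_1(S^1) \cong \mathbb{Z}$.

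Finally, consider the composition
\[
\partial\Sigma_2 \hookrightarrow \partial M \xrightarrow{\;\partial F\;} \partial\mathbb{D}^2 \times T^n \xrightarrow{\;\pi\;} \partial\mathbb{D}^2,
\]
where $\pi$ is the projection onto the first factor. This composition equals $\partial F_2$, which we just showed is not null-homotopic as a map $S^1 \to S^1$. If $\partial\Sigma_2$ were null-homotopic in $\partial M$, this composition would factor (up to homotopy) through a point, hence would be null-homotopic, contradicting the previous sentence. Therefore $\partial\Sigma_2$ is homotopically non-trivial in $\partial M$, and $\Sigma_2 \in \mathcal{F}_M$, as required. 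The main step to check carefully is the identification of the degree of $F_2$ with the winding number of its boundary restriction; the rest is a direct bookkeeping of the free-boundary and disk structure already assembled in Proposition \ref{pp11} and Theorem \ref{its}.
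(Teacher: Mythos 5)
Your proof is correct and follows the same route as the paper: $\Sigma_2$ is a disk by Theorem \ref{its}(2), and the non-zero degree of $F_2$ forces $\partial\Sigma_2$ to be homotopically non-trivial in $\partial M$. The only difference is that you spell out the step the paper leaves implicit (degree of the pair map equals the winding number of the boundary restriction, which then obstructs null-homotopy of $\partial\Sigma_2$ in $\partial M$), which is a faithful elaboration rather than a different argument.
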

\begin{proof}
From Theorem \ref{its} that $\Sigma_2$ is a disk. Since there is a non-zero degree map $F_2:(\Sigma_2,\partial \Sigma_2)\rightarrow (\mathbb{D}^2,\partial\mathbb{D}^2)$, we have that $\partial  \Sigma_2$ is a curve homotopically non-trivial in $\partial M$. Therefore,  $\Sigma_2\in \mathcal{F}_M$.
\end{proof}

\begin{lemma}\label{lemmaaa} We have that,

\[\frac{1}{2}\inf R^M |\Sigma_2|_g+\inf H^{\partial M}|\Sigma_2|_g \leq 2\pi.\]

Moreover, if equality holds then $R_2=\inf R^M$, $H^{\partial \Sigma_2}=\inf H^{\partial M}$ and $\left.u_k\right|_{\Sigma_2}$ are positive constants for every $k=2,\cdots, n+1$.
\end{lemma}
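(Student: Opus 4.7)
The plan is to first prove a refinement of Proposition \ref{p1} in which the constants $\inf R^M$ and $\inf H^{\partial M}$ are retained on the right-hand side, and then specialize it to $j=2$ with test function $\varphi \equiv 1$, combined with the Gauss--Bonnet theorem on the disk $\Sigma_2$.

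To establish the refined inequality, I would rerun the proof of Proposition \ref{p1} verbatim, but replace the step $\hat R_{j+1} > 0$ by the stronger pointwise bound $\hat R_{j+1} \ge R^M \ge \inf R^M$ (which follows from the third identity in Lemma \ref{15} together with $|\tilde B_p|^2 \ge 0$ and $\lambda_p \ge 0$), and also retain $H^{\partial M} \ge \inf H^{\partial M}$ at the final boundary step. After the same substitution $\psi = \varphi \rho_{j+1}^{-1/2}$ and integration by parts, this yields, for every $\varphi \in C^{\infty}(\Sigma_j)$,
\[
4\int_{\Sigma_j}|\nabla_j\varphi|^2 dv_j \ge 2 \inf H^{\partial M}\int_{\partial \Sigma_j}\varphi^2 d\sigma_j - 2\int_{\partial \Sigma_j}\varphi^2 H^{\partial \Sigma_j} d\sigma_j + \inf R^M\int_{\Sigma_j}\varphi^2 dv_j - \int_{\Sigma_j} R_j \varphi^2 dv_j.
\]
Specializing to $j=2$ and $\varphi \equiv 1$ and rearranging gives
\[
\int_{\partial \Sigma_2} H^{\partial \Sigma_2} d\sigma_2 + \frac{1}{2}\int_{\Sigma_2} R_2 dv_2 \ge \inf H^{\partial M}\,|\partial \Sigma_2|_g + \frac{1}{2}\inf R^M\,|\Sigma_2|_g.
\]
Since $\Sigma_2$ is a connected disk by Theorem \ref{its}(2), and since $R_2 = 2K_2$ with $H^{\partial \Sigma_2}$ equal to the geodesic curvature of $\partial \Sigma_2$ in $\Sigma_2$, Gauss--Bonnet identifies the left-hand side with $2\pi \chi(\Sigma_2) = 2\pi$, establishing the desired inequality.

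For the rigidity part, equality forces equality in each pointwise step above. This pins down $R^M = \inf R^M$ on $\Sigma_2$ and $H^{\partial M} = \inf H^{\partial M}$ on $\partial \Sigma_2$; also $|\tilde B_p|^2 \equiv 0$ on $\Sigma_2$ for $p=2,\ldots,n+1$, $\lambda_p = 0$ for $p=3,\ldots,n+1$, and $\nabla_2 \log u_p \equiv 0$ on $\Sigma_2$ for $p = 3,\ldots,n+1$. By connectedness of $\Sigma_2$, each $u_p|_{\Sigma_2}$ is a positive constant for $p \ge 3$, so $\rho_3|_{\Sigma_2}$ is constant. To force constancy of $u_2$ as well, I would use that equality also forces $S_2(\rho_3^{-1/2}) = 0$ in the stability step implicit in the substitution $\psi = \varphi \rho_3^{-1/2}$; the variational characterization $\lambda_2 = \inf S_2(\psi)/\int \psi^2 \rho_3 dv_2$ then gives $\lambda_2 = 0$ and $\rho_3^{-1/2}|_{\Sigma_2}$ a first eigenfunction of $\tilde L_2$. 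Since $\rho_3^{-1/2}|_{\Sigma_2}$ is constant and the principal eigenspace is one-dimensional (spanned by the positive function $u_2$), $u_2$ itself is a positive constant. Then Lemma \ref{16} and Remark \ref{obs1} immediately give $H^{\partial \Sigma_2} = H^{\partial M} = \inf H^{\partial M}$ on $\partial \Sigma_2$, and the first and third identities of Lemma \ref{15}, together with constancy of each $u_p|_{\Sigma_2}$, combine to give $R_2 = \hat R_2 = R^M = \inf R^M$ on $\Sigma_2$.

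The main obstacle is the first step: refining the proof of Proposition \ref{p1} so that both $\inf R^M\int\varphi^2 dv_j$ and $2\inf H^{\partial M}\int_{\partial \Sigma_j}\varphi^2 d\sigma_j$ survive through the algebra. Once this sharpened estimate is in hand, the inequality of the lemma follows immediately from Gauss--Bonnet, and rigidity reduces to careful bookkeeping of the equality conditions.
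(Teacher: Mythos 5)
Your argument is correct, but it is routed differently from the paper's. The paper proves the inequality directly at the level of the warped metric $\hat g_2$: it takes the first identity of Lemma \ref{15} for $\hat R_2$ (which already contains \emph{all} the warping factors $u_2,\dots,u_{n+1}$), rewrites it in terms of $\Delta_2\log u_p$ and the gradients $X_p=\nabla_2\log u_p$, integrates over $\Sigma_2$ using the divergence theorem and the bound $\hat R_2\ge\inf R^M$ (third identity of Lemma \ref{15}), converts the resulting boundary term via Lemma \ref{16} and Remark \ref{obs1}, and finishes with Gauss--Bonnet; it never re-runs a stability/test-function computation. You instead rerun Proposition \ref{p1} with the sharper bounds $\hat R_{j+1}\ge R^M\ge\inf R^M$ and $H^{\partial M}\ge\inf H^{\partial M}$ and test with $\varphi\equiv1$; this is legitimate (the refined inequality does come out of the same algebra, since the only places the paper discards $\inf R^M$ and $\inf H^{\partial M}$ are the pointwise bounds $\hat R_{j+1}>0$ and $H^{\partial M}\ge0$), and it yields the same inequality via Gauss--Bonnet. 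The real divergence appears in the rigidity step: because your route works with $\hat R_3$, $\tilde R_2$ and $Q_2$, the function $u_2$ never enters the pointwise identities, so the vanishing of the dropped terms only gives constancy of $u_p|_{\Sigma_2}$ for $p\ge3$, and you must recover constancy of $u_2$ through $S_2(\rho_3^{-1/2})=0$, $\lambda_2=0$, and the simplicity of the principal eigenvalue of the Robin problem \eqref{eq3}. That extra ingredient is standard and your use of it is sound, but note that the paper avoids it entirely: since the sums in its formula for $\hat R_2$ start at $p=2$, equality forces $X_2=\nabla_2\log u_2=0$ on the same footing as the other $X_p$, so $u_2|_{\Sigma_2}$ is constant with no appeal to eigenvalue simplicity. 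In short: your inequality proof is a repackaging of the same estimate through the second-variation machinery, your rigidity proof is correct but needs one additional (standard) spectral fact that the paper's more direct bookkeeping at the level of $\hat g_2$ makes unnecessary.
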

\begin{proof}
 From the remark \ref{obs1} and lemma \ref{16}
\[\inf H^{\partial M}\leq \sum_{p=2}^{n+1}\langle \nabla_2\log u_p,\eta_2\rangle + H^{\partial\Sigma_2}.\]

This implies that 
\begin{equation} \label{eqq1}
\inf H^{\partial M}|\partial \Sigma_2|_g\leq \sum_{p=2}^{n+1}\int_{\partial \Sigma_2}\langle \nabla_2\log u_pd\sigma_2,\eta_2\rangle + \int_{\partial \Sigma_2} H^{\partial\Sigma_2}d\sigma_2.
\end{equation}

From lemma \ref{15}, we have that
\begin{eqnarray*}
\hat{R}_2&=& R_2-2\sum_{p=2}^{n+1}u_p^{-1}\Delta_2u_p-2\sum_{2\leq p<q\leq n+1}\langle\nabla_2\log u_p,\nabla_2\log u_q\rangle\\
&=& R_2-2\sum_{p=2}^{n+1}u_p^{-1}\Delta_2u_p-\left|\sum_{p=2}^{n+1} X_p\right|^2+\sum_{p=2}^{n+1} |X_p|^2,
\end{eqnarray*}
\noindent where $X_p:=\nabla_2\log u_p$. Since
\[u_p^{-1}\Delta_2u_p=\Delta_2\log u_p+|X_p|^2,\]
\noindent we have that
\[\hat{R}_2=R_2-2\sum_{p=2}^{n+1}\Delta_2\log u_p-\left|\sum_{p=2}^{n+1} X_p\right|^2-\sum_{p=2}^{n+1} |X_p|^2.\]

Since $\hat{R}_2\geq \inf R^M$, we obtain
\begin{eqnarray*}
\frac{1}{2}\inf R^M|\Sigma_2|_g &\leq & \frac{1}{2}\int_{\Sigma_2}\hat{R}_2dv_2\\
&=& \frac{1}{2}\int_{\Sigma_2}R_2dv_2-\sum_{p=2}^{n+1}\int_{\Sigma_2}\Delta_2\log u_p dv_2\\
& & -\frac{1}{2}\int_{\Sigma_2}\left|\sum_{p=2}^{n+1} X_p\right|^2dv_2-\frac{1}{2}\sum_{p=2}^{n+1}\int_{\Sigma_2} |X_p|^2dv_2\\
& \leq &  \frac{1}{2}\int_{\Sigma_2}R_2dv_2-\sum_{p=2}^{n+1}\int_{\Sigma_2}\Delta_2\log u_p dv_2.
\end{eqnarray*}

It follows from Divergence Theorem that
\begin{equation}\label{eqq2}
\frac{1}{2}\inf R^M|\Sigma_2|_g \leq \frac{1}{2}\int_{\Sigma_2}R_2dv_2-\sum_{p=2}^{n+1}\int_{\partial\Sigma_2}\langle \nabla_2\log u_p,\eta_2\rangle d\sigma_2.
\end{equation}

By inequalities (\ref{eqq1}) and (\ref{eqq2}), we have that
\[\frac{1}{2}\inf R^M|\Sigma_2|_g +\inf H^{\partial M}|\partial\Sigma_2|_g  \leq \frac{1}{2}\int_{\Sigma_2}R_2dv_2+\int_{\partial \Sigma_2} H^{\partial\Sigma_2}d\sigma_2.\]

Therefore, from Gauss-Bonnet Theorem, we obtain

\[\frac{1}{2}\inf R^M|\Sigma_2|_g +\inf H^{\partial M}|\partial\Sigma_2|_g  \leq 2\pi\mathcal{X}(\Sigma_2)=2\pi.\]

However, note that if holds equality then the field $X_p=0$ for every $p=2,\cdots ,n+1.$ It follows that $\left.u_p\right|_{\Sigma_2}$ are positive constants for every $p=2,\cdots, n+1$. Consequently, $R_2=\hat{R}_2\geq \inf R^M$ and $H^{\partial\Sigma_2}\geq \inf H^{\partial M}$. Therefore, from Gauss-Bonnet theorem, we have that $R_2= \inf R^M$ and $H^{\partial\Sigma_2}=\inf H^{\partial M}$.

\end{proof}

\begin{corollary}
We have that,
\[\frac{1}{2}\inf R^M \mathcal{A}(M,g)+\inf H^{\partial M}\mathcal{L}(M,g)\leq 2\pi.\]

Moreover, if equality holds then $R_2=\inf R^M$, $H^{\partial \Sigma_2}=\inf H^{\partial M}$ and $\left.u_k\right|_{\Sigma_2}$ are positive constants for every $k=2,\cdots, n+1$.
\end{corollary}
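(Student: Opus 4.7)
The plan is to deduce the corollary directly from Lemma \ref{lemmaaa} combined with Lemma \ref{lemmaa}. By Lemma \ref{lemmaa}, the connected $2$-slice $\Sigma_2$ produced in Proposition \ref{pp11} belongs to the family $\mathcal{F}_M$. Consequently, from the very definition of $\mathcal{A}(M,g)$ and $\mathcal{L}(M,g)$ as infima over $\mathcal{F}_M$, I obtain the two comparison inequalities
\[\mathcal{A}(M,g)\leq |\Sigma_2|_g \quad\text{and}\quad \mathcal{L}(M,g)\leq |\partial\Sigma_2|_g.\]

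Next, since the hypotheses give $\inf R^M>0$ and $\inf H^{\partial M}\geq 0$, I may multiply these inequalities by the positive constant $\tfrac{1}{2}\inf R^M$ and the nonnegative constant $\inf H^{\partial M}$ respectively and add them. This yields
\[\frac{1}{2}\inf R^M\,\mathcal{A}(M,g)+\inf H^{\partial M}\,\mathcal{L}(M,g)\leq \frac{1}{2}\inf R^M\,|\Sigma_2|_g+\inf H^{\partial M}\,|\partial\Sigma_2|_g,\]
and the right-hand side is bounded by $2\pi$ by Lemma \ref{lemmaaa}. Chaining the two estimates gives the desired inequality.

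For the rigidity statement, suppose equality $=2\pi$ holds in the corollary. Then both inequalities in the chain above must be saturated. In particular the bound supplied by Lemma \ref{lemmaaa} is attained, so the equality clause of that lemma applies verbatim and delivers $R_2=\inf R^M$, $H^{\partial\Sigma_2}=\inf H^{\partial M}$, and $\left.u_k\right|_{\Sigma_2}$ positive and constant for every $k=2,\ldots,n+1$.

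I do not anticipate a real obstacle here, since every ingredient is already in place: the corollary is essentially a packaging of Lemma \ref{lemmaaa} through the observation that $\Sigma_2\in\mathcal{F}_M$. The only minor point worth noting is the positivity of $\inf R^M$, which ensures that equality in the scalar-curvature term of the chain forces $\mathcal{A}(M,g)=|\Sigma_2|_g$, so that equality in the corollary really does reduce to the equality case of Lemma \ref{lemmaaa}.
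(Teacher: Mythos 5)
Your argument is correct and is essentially the paper's own proof: compare the infima $\mathcal{A}(M,g)$, $\mathcal{L}(M,g)$ against the specific slice $\Sigma_2\in\mathcal{F}_M$ furnished by Proposition \ref{pp11} and Lemma \ref{lemmaa}, bound the resulting combination by $2\pi$ via Lemma \ref{lemmaaa}, and note that equality in the corollary forces equality in that lemma, whose rigidity clause then gives the stated conclusions. Your extra remark that positivity of $\inf R^M$ yields $\mathcal{A}(M,g)=|\Sigma_2|_g$ is harmless but not needed, since the corollary's conclusions follow already from the equality case of Lemma \ref{lemmaaa}.
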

\begin{proof}
We have that 
\[\frac{1}{2}\inf R^M \mathcal{A}(M,g)+\inf H^{\partial M}\mathcal{L}(M,g)\leq \frac{1}{2}\inf R^M |\Sigma|_g+\inf H^{\partial M}|\partial\Sigma|_g\]
\noindent for every $\Sigma\in\mathcal{F}_M$. From Proposition \ref{pp11} and Lemmas \ref{lemmaa} and \ref{lemmaaa} we have that there is $\Sigma_2\in\mathcal{F}_M$ such that 
\begin{equation}\label{eeqq1}\frac{1}{2}\inf R^M |\Sigma_2|_g+\inf H^{\partial M}|\Sigma_2|_g \leq 2\pi.\end{equation}
It follows that
\begin{equation}\label{eeqq2}\frac{1}{2}\inf R^M \mathcal{A}(M,g)+\inf H^{\partial M}\mathcal{L}(M,g)\leq 2\pi.\end{equation}
If the equality holds in (\ref{eeqq2}) then the equality holds in (\ref{eeqq1}). Therefore, from Lemma \ref{lemmaaa} we have that $R_2=\inf R^M$, $H^{\partial \Sigma_2}=\inf H^{\partial M}$ and $\left.u_k\right|_{\Sigma_2}$ are positive constants for every $k=2,\cdots, n+1$.
\end{proof}

\section{Proof of the Rigidity}
\begin{proof}
Without loss of generality, we can assume that $R_g\geq2$. Using an ideia in the Gromov-Lawsons paper on positive scalar curvature and mean-convex manifolds, we obtain that the doubling $DM$ of $M$ has a metric $g$ with $R_g\geq2$. Moreover,  if $F:(M,\partial M) \rightarrow (\mathbb{D}^2 \times T^n, \partial\mathbb{D}^2 \times T^n)$ is a non-zero degree map, then the induced map $DF: DM \rightarrow D\mathbb{D}^2 \times T^n$ has the same non-zero degree,
simply by looking at the preimage of a nonsingular point.  Hence, $DM$ admits a map to $\mathbb{S}^2\times T^n$ with non-zero degree, since $D\mathbb{D}^2=\mathbb{S}^2$. Now, we obtain that equality in (\ref{EQ1}) implies that the equality is achieved in the main inequality of the Theorem 1.1 in \cite{JZ} for our doubling manifold $DM$. Therefore, the rigidity part can be obtained from Theorem 1.1 in \cite{JZ}.

\end{proof}
\bibliographystyle{abbrv}
\bibliography{main}

\end{document}